\theoremstyle{plain}
\newtheorem{theorem}{Theorem}[section]
\newtheorem{proposition}[theorem]{Proposition}
\newtheorem{corollary}[theorem]{Corollary}
\newtheorem{example}[theorem]{Example}
\newtheorem{defn}[theorem]{Definition}
\newcommand{\inv}{{\mathrm {inv}}}
\newcommand{\Stir}{{\mathrm {Stir}}}
\newcommand{\Hilb}{{\mathrm {Hilb}}}
\newcommand{\coinv}{{\mathrm {coinv}}}
\newcommand{\code}{{\mathtt {code}}}
\newcommand{\st}{{\mathrm {st}}}
\newcommand{\Mat}{{\mathrm {Mat}}}
\newcommand{\conv}{{\mathrm {conv}}}
\newcommand{\PM}{{\mathrm {PM}}}
\newcommand{\initial}{{\mathrm {in}}}
\newcommand{\symm}{{\mathfrak{S}}}
\newcommand{\CC}{{\mathbb {C}}}
\newcommand{\QQ}{{\mathbb {Q}}}
\newcommand{\ZZ}{{\mathbb {Z}}}
\newcommand{\OP}{{\mathcal{OP}}}
\newcommand{\PP}{{\mathbb{P}}}
\newcommand{\VVV}{{\mathcal{V}}}
\newcommand{\WWW}{{\mathcal{W}}}
\newcommand{\UUU}{{\mathcal{U}}}
\newcommand{\CCC}{{\mathcal{C}}}
\newcommand{\MMM}{{\mathcal{M}}}
\newcommand{\xx}{{\mathbf {x}}}
\newcommand{\II}{{\mathbf {I}}}
\newcommand{\TT}{{\mathbf {T}}}
\newcommand{\neglex}{{\texttt {neglex}}}
\begin{document}

\begin{frontmatter}
\title{Line configurations and $r$-Stirling partitions}
\runtitle{Line configurations and $r$-Stirling partitions}

\begin{aug}
\author{\fnms{Brendon}
\snm{Rhoades}\ead[label=e1]{bprhoades@math.ucsd.edu}}
\address{Department of Mathematics\\
University of California, San Diego\\
La Jolla, CA, 92093, USA\\
\printead{e1}}
\and
\author{\fnms{Andrew Timothy} \snm{Wilson}\ead[label=e2]{andwils2@pdx.edu}}
\address{Department of Mathematics\\
Portland State University\\
Portland, OR, 97201, USA\\
\printead{e2}}

\affiliation{Some University and Another University}
\end{aug}

\begin{abstract}
A set partition of $[n] := \{1, 2, \dots, n \}$ is called {\em $r$-Stirling} if the numbers
$1, 2, \dots, r$ belong to distinct blocks.
Haglund, Rhoades, and Shimozono constructed a graded ring $R_{n,k}$ depending on two 
positive integers $k \leq n$ whose algebraic properties are governed by
the combinatorics of ordered 
set partitions of $[n]$ with $k$ blocks.  We introduce a variant $R_{n,k}^{(r)}$ of 
this quotient for ordered $r$-Stirling partitions which depends on three integers 
$r \leq k \leq n$.  We describe the standard monomial basis of $R_{n,k}^{(r)}$ and use 
the combinatorial notion of the {\em coinversion code} of an ordered set partition
to reprove and generalize some results of Haglund et.\ al.\ in a more direct way.  
Furthermore, we introduce a variety $X_{n,k}^{(r)}$ of line configurations whose 
cohomology is presented as the integral form of $R_{n,k}^{(r)}$, generalizing
results of Pawlowski and Rhoades.
\end{abstract}

\received{\smonth{1} \sday{1}, \syear{0000}}


\end{frontmatter}

\section{Introduction}
\label{Introduction}

Given two  integers $r \leq n$, a set partition of $[n] := \{1, 2, \dots, n \}$ is 
called {\em $r$-Stirling} if the first $r$ letters $1, 2, \dots, r$ lie in distinct blocks.
The {\em $r$-Stirling number (of the second kind)}
$\Stir_{n,k}^{(r)}$ counts $r$-Stirling partitions of $[n]$ with $k$ blocks.
An {\em ordered $r$-Stirling partition} is an $r$-Stirling partition
$\sigma = (B_1 \mid \cdots \mid B_k)$ equipped with a total order on its blocks.
We let $\OP_{n,k}^{(r)}$ denote the family of ordered $r$-Stirling partitions of $[n]$ with 
$k$ blocks;
these are counted by
$|\OP_{n,k}^{(r)} | = k! \cdot \Stir^{(r)}_{n,k}$.

An example element of $\OP_{7,4}^{(3)}$ is 
$(2 \,  6  \mid 5 \mid 1 \, 7  \mid 3 \, 4 )$.
On the other hand, the ordered set partition $(4 \, 5 \mid 2 \mid 1 \, 3 \, 6 \mid 7)$
fails to be $3$-Stirling since $1$ and $3$ belong to the same block.
The symmetric group $S_n$ acts on ordered set partitions of $[n]$ by letter permutation.
Although $\OP_{n,k}^{(r)}$ is not closed under the full action of $S_n$, it does carry
an action of the parabolic subgroup $S_r \times S_{n-r}$.

When $r = k = n$, an element of $\OP_{n,n}^{(n)}$ is just a permutation in  $S_n$.
The combinatorics of the symmetric group $S_n$ is well-known to govern 
both the algebraic structure of the {\em coinvariant ring} $R_n$
and the geometric structure of the {\em flag variety}
$\mathcal{F \ell}(n)$.  

In the case $r = 0$ where 
$\OP_{n,k} := \OP_{n,k}^{(0)}$ is the collection of $k$-block ordered set partitions 
of $[n]$, the 
 {\em Delta Conjecture} \cite{HRW}
 in the theory of Macdonald polynomials
motivated the definition and study of a generalized coinvariant ring
$R_{n,k}$ \cite{HRS}
and a generalization $X_{n,k}$ of the flag variety \cite{PR}  which specialize to their 
classical counterparts when $k = n$.  The algebraic properties of 
$R_{n,k}$ and the geometric properties of $X_{n,k}$ are governed by 
combinatorial properties of ordered set partitions in $\OP_{n,k}$.

At a workshop in Montr\'eal in the Summer of 2017, Jeff Remmel asked the authors if it
was possible to extend this theory to encapsulate ordered $r$-Stirling partitions;
in this paper we do exactly that.
We consider a quotient ring $R_{n,k}^{(r)}$ and a variety 
$X_{n,k}^{(r)}$ whose properties are controlled by the combinatorics of $\OP_{n,k}^{(r)}$.
The quotient $R_{n,k}^{(r)}$ of $\QQ[\xx_n] := \QQ[x_1, \dots, x_n]$
(together with its companion quotient
$S_{n,k}^{(r)}$ of $\ZZ[\xx_n] := \ZZ[x_1, \dots, x_n]$) is defined as follows.  
If $\xx_m = (x_1, \dots, x_m)$ is a list of variables and $d \geq 0$, we recall the {\em elementary}
and {\em homogeneous} symmetric polynomials of degree $d$ in the variable 
set $\xx_m$:
\begin{align}
e_d(\xx_m) &:= \sum_{1 \leq i_1 < \cdots < i_d \leq m} x_{i_1} \cdots x_{i_d}, \\
h_d(\xx_m) &:= \sum_{1 \leq i_1 \leq \cdots \leq i_d \leq m} x_{i_1} \cdots x_{i_d}.
\end{align}

\begin{defn}
\label{main-definition}
For $r \leq k \leq n$, let $I_{n,k}^{(r)} \subseteq \QQ[\xx_n]$ be the ideal
\begin{equation}
I_{n,k}^{(r)} := \left \langle
\begin{array}{c}
x_1^k, x_2^k, \dots, x_n^k, \\
e_n(\xx_n), e_{n-1}(\xx_n), \dots, e_{n-k+1}(\xx_n), \\
h_{k-r+1}(\xx_r), h_{k-r+2}(\xx_r), \dots, h_k(\xx_r)
\end{array}
\right \rangle
\end{equation}
and let $R_{n,k}^{(r)}$ be the corresponding quotient ring:
\begin{equation}
R_{n,k}^{(r)} := \QQ[\xx_n]/I_{n,k}^{(r)}.
\end{equation}
Furthermore, let $J_{n,k}^{(r)} \subseteq \ZZ[\xx_n]$ be the 
ideal in $\ZZ[\xx_n]$ with the same generating set as $I_{n,k}^{(r)}$ and 
let $S_{n,k}^{(r)} = \ZZ[\xx_n]/J_{n,k}^{(r)}$ be the corresponding
quotient.
\end{defn}

When $r = k = n$, the ideal $I_n := I_{n,n}^{(n)}$ is just the classical {\em invariant ideal}
$\langle e_1(\xx_n), e_2(\xx_n), \dots, e_n(\xx_n) \rangle$ generated by the $n$
elementary symmetric polynomials.  When $r = 0$, the ideal
$I_{n,k} := I_{n,k}^{(0)}$ is precisely the ideal considered in \cite{HRS}, and 
its companion ideal $J_{n,k} := J_{n,k}^{(0)}$ over the ring of integers 
was considered in \cite{PR}.

The quotient ring $S_{n,k}^{(r)}$ will be shown to calculate the 
cohomology (singular, with coefficients in $\ZZ$) of a natural space $X_{n,k}^{(r)}$
whose geometry is governed by the combinatorics of $\OP_{n,k}^{(r)}$.
Let $\PP^{k-1}$ be the complex projective space of lines through the origin in
$\CC^k$, so that $(\PP^{k-1})^n$ is the complex algebraic variety of all $n$-tuples 
$(\ell_1, \dots, \ell_n)$
of lines through 
the origin in $\CC^k$.  We consider the following family of line configurations.

\begin{defn}
\label{variety-definition}
Let $r \leq k \leq n$ and define a subset $X_{n,k}^{(r)} \subseteq (\PP^{k-1})^n$ by
\begin{equation}
X_{n,k}^{(r)} := \left\{ (\ell_1, \ell_2, \dots, \ell_n) \in (\PP^{k-1})^n \,:\,
\begin{array}{c}
\ell_1 + \ell_2 + \cdots + \ell_n = \CC^k \text{ and } \\
\dim(\ell_1 + \ell_2 + \cdots + \ell_r) = r \end{array} \right \}.
\end{equation}
\end{defn}

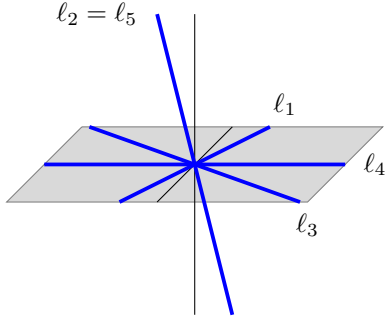
\begin{figure}
\begin{tikzpicture}

\draw [gray, fill=gray!30] (-1.5,0.5) -- (2.5,0.5) -- (1.5,-0.5) -- (-2.5,-0.5) -- (-1.5,0.5);

\draw  (0,-2) -- (0,2);

\draw (-2,0) -- (2,0);

\draw (0.5,0.5) -- (-0.5,-0.5);

\draw [line width = 0.5mm, blue] (1,0.5) -- (-1,-0.5);

\draw [line width = 0.5mm, blue] (-1.4,0.5) -- (1.4,-0.5);

\draw [line width = 0.5mm, blue] (0.5,-2) -- (-0.5,2);

\draw [line width = 0.5mm, blue] (-2,0) -- (2,0);

\node at (-1.3,2) {$\ell_2 = \ell_5$};

\node at (1.2,0.8) {$\ell_1$};

\node at (2.4,0) {$\ell_4$};

\node at (1.5,-0.8) {$\ell_3$};
\end{tikzpicture}
\caption{A point in $X_{5,3}^{(2)}$.}
\label{point}
\end{figure}

A typical point in $X_{n,k}^{(r)}$ is an $n$-tuple of lines 
$(\ell_1, \dots, \ell_n)$ through the origin in $\CC^k$ such that these lines span 
$\CC^k$ and such that the first $r$ of these lines are linearly independent.
An example of such a line configuration in $X_{5,3}^{(2)}$ is shown in Figure~\ref{point};
the first two lines $\ell_1$ and $\ell_2$ are linearly independent, and the five lines
$\ell_1, \dots, \ell_5$ together span $\CC^3$.

The product group $S_r \times S_{n-r}$ acts on $X_{n,k}^{(r)}$ by line permutation.
The set $X_{n,k}^{(r)}$ is a Zariski open subset of $(\PP^{k-1})^n$ and is therefore
both a variety and a smooth complex manifold.

When $r = k = n$, the space $X_{n,k}^{(r)}$ may be identified with the quotient $G/T$,
where $G = GL_n(\CC)$ is the group of invertible $n \times n$ complex matrices and
$T \subseteq G$ is the diagonal torus. 
 If $B \subseteq G$ is the Borel subgroup of upper triangular
matrices, the quotient $G/B$ is the classical flag variety $\mathcal{F \ell}(n)$
of type A$_{n-1}$ and
the canonical projection $G/T \twoheadrightarrow G/B$ is a homotopy equivalence.
When $r = 0$, the space $X_{n,k} := X_{n,k}^{(0)}$ of $n$-tuples of lines spanning 
$\CC^k$ was defined and studied by Pawlowski and Rhoades as an extension of the 
flag variety \cite{PR}.

The remainder of the paper is organized as follows.  In {\bf Section~\ref{Coinversion}}
we will introduce a new statistic on an ordered set partition $\sigma$: the {\em coinversion code}
$\code(\sigma)$.  This will allow us to read off the standard monomial basis of the quotient
ring $R_{n,k}^{(r)}$ directly from the combinatorics of $\OP_{n,k}^{(r)}$, both extending
and making more combinatorial the results regarding $R_{n,k}$ in \cite{HRS}.
In {\bf Section~\ref{Line}} we will study the space of line configurations
$X_{n,k}^{(r)}$ and prove that $H^{\bullet}(X_{n,k}^{(r)}) = S_{n,k}^{(r)}$.
We will also describe an affine paving of $X_{n,k}^{(r)}$ with cells indexed by 
partitions in $\OP_{n,k}^{(r)}$, together with formulas for the representatives of
the closures of these 
cells in cohomology.

\section{Coinversion codes and standard bases}
\label{Coinversion}

Recall that an {\em inversion} of a permutation $w  \in S_n$ is a 
pair $1 \leq i < j \leq n$ such that $i$ appears to the right of $j$ in the one-line notation
$w = w_1 \dots w_n$, so that the inversions of $231 \in S_3$ are the pairs 
$(1,2)$ and $(1,3)$.  Extending this notion to ordered set partitions, if 
$\sigma = (B_1 \mid \cdots \mid B_k)$ is an ordered set partition of $[n]$ with $k$ blocks,
a pair $1 \leq i < j \leq n$ is said to be an {\em inversion} of $\sigma$ if
\begin{itemize}
\item the block of $i$ is strictly to the right of the block of $j$ in $\sigma$, and
\item the letter $i$ is minimal in its block.
\end{itemize}
We let $\inv(\sigma)$ be the number of inversions of $\sigma$, so that if 
$\sigma = (2 5 \mid 1 \mid 3 4) \in \OP_{5,3}$ the inversion pairs are
$(1,2), (1,5),$ and $(3,5)$ so that $\inv(\sigma) = 3$.

We will not be interested in the statistic $\inv$ itself, but rather its complementary statistic.
For any three integers $r \leq k \leq n$, it is not hard to see that the statistic $\inv$
on $\OP_{n,k}^{(r)}$ achieves its maximum value at the unique point
$\sigma_0 := (k, k+1 \dots, n-1, n \mid k-1 \mid \cdots \mid 1) \in \OP_{n,k}^{(r)}$, and that 
\begin{equation}
\inv(\sigma_0) = (n-k)(k-1) + {k \choose 2}.
\end{equation}
We define the statistic $\coinv$ on $\OP_{n,k}^{(r)}$ by the rule 
\begin{equation}
\coinv(\sigma) := (n-k)(k-1) + {k \choose 2} - \inv(\sigma).
\end{equation}
For example, we have
\begin{equation*}
\coinv(25 \mid 1 \mid 34) = (5-3)(3-1) + {3 \choose 2} - \inv(25 \mid 1 \mid 34) = 
4 + 3 - 3 = 4.
\end{equation*}

It will be convenient to break up the coinversion statistic $\coinv$ into a sequence 
of smaller statistics.  Given an ordered set partition 
$\sigma = (B_1 \mid \cdots \mid B_k) \in \OP_{n,k}^{(r)}$, define the {\em coinversion code}
$\code(\sigma) = (c_1, c_2, \dots, c_n)$ as follows.  
Suppose $1 \leq i \leq n$ and $i \in B_j$.  Then
\begin{equation}
c_i = \begin{cases}
| \{ \ell > j \,:\, \min(B_{\ell}) > i \} | & \text{if $i = \min(B_j)$} \\
| \{ \ell > j \,:\, \min(B_{\ell}) > i \} | + (j-1) & \text{if $i \neq \min(B_j)$.}
\end{cases}
\end{equation}
The coinversion code of $(25 \mid 1 \mid 34)$ is therefore 
$\code(\sigma) = (c_1, c_2, c_3, c_4, c_5) = (1, 1, 0, 2, 0)$.  The coinversion code breaks
the statistic $\coinv$ into pieces.

\begin{proposition}
\label{coinv-decomposition}
Let $\sigma \in \OP_{n,k}^{(r)}$ with $\code(\sigma) = (c_1, c_2, \dots, c_n)$.
Then 
\begin{equation}
\coinv(\sigma) = c_1 + c_2 + \cdots + c_n.
\end{equation}
\end{proposition}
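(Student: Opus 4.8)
The plan is to prove the identity by a direct counting argument. Since neither side depends on the $r$-Stirling condition, I will work with an arbitrary ordered set partition $\sigma = (B_1 \mid \cdots \mid B_k)$ of $[n]$. Write $b_i$ for the index of the block containing $i$ (so that $i \in B_{b_i}$) and $m_p := \min(B_p)$ for the minimum of the $p$-th block. First I would isolate the ``block-minimum part'' of the code: for \emph{every} $i$, whether or not it is minimal in its block, set $A_i := |\{\ell > b_i : m_\ell > i\}|$, so that $c_i = A_i$ when $i = m_{b_i}$ and $c_i = A_i + (b_i - 1)$ otherwise. Summing over $i$ and collecting the correction terms blockwise, the $|B_p| - 1$ non-minimal elements of $B_p$ each contribute $p-1$, giving
\begin{equation*}
\sum_{i=1}^n c_i = \sum_{i=1}^n A_i + \sum_{p=1}^k (|B_p| - 1)(p - 1).
\end{equation*}

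The key step is to combine $\sum_i A_i$ with $\inv(\sigma)$. Reading both as counts of pairs of indices $(j, p)$, I would establish
\begin{equation*}
\sum_{i=1}^n A_i = \big| \{ (j,p) : p > b_j, \ m_p > j \} \big|, \qquad
\inv(\sigma) = \big| \{ (j,p) : p > b_j, \ m_p < j \} \big|.
\end{equation*}
The first equality is a mere reindexing of the definition of $A_i$. For the second, I would rewrite $\inv$ by summing over block minima: an inversion is a pair $i < j$ with $b_i > b_j$ and $i = m_{b_i}$, and grouping by the block $B_p$ containing the minimal element $i = m_p$ turns $\inv(\sigma)$ into the count of pairs $(j,p)$ with $p > b_j$ and $m_p < j$. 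The crucial observation is that these two pair-sets are complementary inside $\{(j,p) : p > b_j\}$: whenever $p > b_j$ we have $j \notin B_p$, hence $m_p \neq j$, so exactly one of $m_p > j$ or $m_p < j$ holds. Therefore
\begin{equation*}
\sum_{i=1}^n A_i + \inv(\sigma) = \big| \{ (j,p) : p > b_j \} \big| = \sum_{j=1}^n (k - b_j) = \sum_{p=1}^k |B_p|\,(k - p).
\end{equation*}

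Combining the two displays and using $\sum_p |B_p| = n$, the claim reduces to the routine simplification
\begin{equation*}
\sum_{p=1}^k |B_p|\,(k-p) + \sum_{p=1}^k (|B_p| - 1)(p-1) = (k-1)n - {k \choose 2},
\end{equation*}
in which every term involving the product $|B_p|\, p$ cancels. Since $(n-k)(k-1) + {k \choose 2} = (k-1)n - {k \choose 2}$, the right-hand side is exactly the maximum value $\inv(\sigma_0)$, so the previous displays yield $\sum_i c_i + \inv(\sigma) = (n-k)(k-1) + {k \choose 2}$, which is precisely the definition of $\coinv(\sigma)$. The only real content lies in the complementarity used in the key step; once the two pair-counts are set up correctly the remainder is bookkeeping, and I expect the main care needed will be in checking that the $(b_i - 1)$ corrections for non-minimal letters assemble into the clean blockwise sum above.
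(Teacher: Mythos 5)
Your proof is correct. There is nothing in the paper to compare it against: Proposition~\ref{coinv-decomposition} is stated immediately after the definition of $\code$ with no proof at all (the authors treat it as a routine consequence of the definitions), so your argument supplies a verification the paper omits. Checking your steps: the blockwise collection of the $(b_i-1)$ corrections is right, since each block $B_p$ has exactly $|B_p|-1$ non-minimal elements; the recount of $\inv(\sigma)$ as $|\{(j,p) : p > b_j,\ m_p < j\}|$ is a genuine bijection (an inversion $(i,j)$ maps to $(j,b_i)$, and one recovers $i = m_p$ because the minimal element of a block is unique); and the complementarity is valid because every block of $\sigma \in \OP_{n,k}^{(r)}$ is nonempty, so $m_p$ is defined for all $p$, and $p \neq b_j$ forces $j \notin B_p$, hence $m_p \neq j$. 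The closing arithmetic also checks out:
\begin{equation*}
\sum_{p=1}^k |B_p|(k-p) + \sum_{p=1}^k (|B_p|-1)(p-1)
= kn - n - \binom{k+1}{2} + k
= n(k-1) - \binom{k}{2},
\end{equation*}
and indeed $(n-k)(k-1) + \binom{k}{2} = n(k-1) - \binom{k}{2}$, so $\sum_i c_i + \inv(\sigma)$ equals the constant appearing in the definition of $\coinv$, as required. Your opening remark that the $r$-Stirling condition plays no role is also accurate: both $\code$ and $\coinv$ make sense for arbitrary ordered set partitions of $[n]$ into $k$ nonempty blocks, and the identity holds at that level of generality.
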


Which sequences $(c_1, c_2, \dots, c_n)$ of nonnegative integers can arise as the coinversion
code of some element $\sigma \in \OP_{n,k}^{(r)}$?  When $r = k = n$, these are precisely
the sequences $(c_1, c_2, \dots, c_n)$ which are componentwise $\leq$ the
staircase $(n-1, n-2, \dots, 0)$ of length $n$.  To state the answer for general
$r \leq k \leq n$, we will need some definitions.

If $S = \{s_1 < s_2 < \cdots < s_m\}$ is any subset of $[n]$, the {\em skip composition}
$\gamma(S) = (\gamma(S)_1, \dots, \gamma(S)_n)$ is the sequence given by
\begin{equation}
\gamma(S)_i = \begin{cases}
i - j + 1 & \text{if $i = s_j \in S$} \\
0 & \text{if $i \notin S$.}
\end{cases}
\end{equation}
We also let $\gamma(S)^* = (\gamma(S)_n, \dots, \gamma(S)_1)$ be the reversal
of the skip composition.  As an example, if $n = 7$ and $S = \{2,3,6\}$ then
$\gamma(S) = (0,2,2,0,0,4,0)$ and $\gamma(S)^* = (0,4,0,0,2,2,0)$.

\begin{theorem}
\label{coinversion-characterization}
Let $r \leq k \leq n$.  The map $\sigma \mapsto \code(\sigma)$ gives a bijection
from $\OP_{n,k}^{(r)}$ to the family $(c_1, \dots, c_n)$ of nonnegative integer sequences
such that 
\begin{itemize}
\item for all $r+1 \leq i \leq n$ we have $c_i < k$,
\item for all $1 \leq i \leq r$ we have $c_i < k - i + 1$, and
\item for any subset $S \subset [n]$ with $|S| = n-k+1$, the componentwise
inequality $\gamma(S)^* \leq (c_1, \dots, c_n)$ {\em fails} to hold.
\end{itemize}
\end{theorem}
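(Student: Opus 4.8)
The plan is to prove the theorem by exhibiting an explicit inverse to the map $\sigma \mapsto \code(\sigma)$, rebuilding $\sigma$ from a sequence $(c_1,\dots,c_n)$, so that the three bullet conditions emerge as exactly the hypotheses under which this reconstruction is well defined and outputs an element of $\OP_{n,k}^{(r)}$. The engine is a pair of range estimates. For $\sigma=(B_1 \mid \cdots \mid B_k)$ and a letter $i$ in block $B_j$, let $p_i$ be the number of block minima of $\sigma$ smaller than $i$. A direct count from the definition of $\code$ gives $c_i \in \{0,1,\dots,k-p_i-1\}$ when $i=\min(B_j)$ and $c_i \in \{k-p_i,\dots,k-1\}$ when $i \neq \min(B_j)$. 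These ranges are disjoint and together fill $\{0,\dots,k-1\}$, so $c_i \le k-1$ always (the first bullet, for $i>r$) and the single test $c_i < k-p_i$ detects whether $i$ is a block minimum. This yields the reconstruction: scanning $i=1,\dots,n$ and maintaining only the running count $p$ of minima produced so far, declare $i$ a new block minimum precisely when $c_i < k-p$ and a non-minimum otherwise, which recovers the set $M$ of minima. The linear order of the blocks is then recovered from the values $\{c_m : m \in M\}$, which form an ordinary Lehmer-type coinversion table of the sequence of minima, and finally each non-minimum $i$ is dropped into the unique block compatible with $c_i$ --- unique because, over the blocks with minimum $<i$, the position-to-code map is strictly increasing (a one-line computation using $\min(B_j)<i$). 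Hence $\code$ is injective and its inverse $\Phi$ is pinned down.

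With the ranges in hand the first two bullets are immediate in both directions. For $i \le r$ the $r$-Stirling property forces $1,\dots,r$ into distinct blocks, so each such $i$ is its block minimum and $p_i=i-1$; the minimum range then becomes $c_i \le k-i$, i.e.\ $c_i < k-i+1$ (the second bullet). Conversely, the second bullet reads $c_i < k-i+1 = k-p_i$ once $1,\dots,i-1$ have been made minima, which by induction forces the reconstruction to make each of $1,\dots,r$ a block minimum; this is exactly the $r$-Stirling property of the output. The first bullet likewise guarantees the scan never tries to insert a non-minimum when no block is present, since $c_i \le k-1$ rules out $p_i=0$ for a non-minimum.

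The substance is the third bullet, which I must show is equivalent to the reconstruction producing exactly $k$ blocks (and, in the forward direction, to a genuine code avoiding every $\gamma(S)^*$). The scan produces $P$ minima, where $P$ is non-decreasing, capped at $k$, and equals the maximal length of an increasing sequence $i_1<\cdots<i_P$ with $c_{i_s}\le k-s$ --- call this a \emph{low chain}. Reindexing by $u \mapsto n+1-u$ converts the inequality $\gamma(S)^* \le (c_1,\dots,c_n)$ for an $(n-k+1)$-subset $S$ into the statement that $U=\{n+1-s : s \in S\}$ is a \emph{high chain}, namely $u_1<\cdots<u_{n-k+1}$ with $c_{u_t}\ge k-u_t+t$; thus the third bullet says no high chain exists. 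I then prove the dichotomy that, for any sequence with $c_i\le k-1$, a low chain of length $k$ exists if and only if no high chain of length $n-k+1$ exists. One direction is the greedy observation that if $P<k$ then the first $n-k+1$ positions where the scan inserts a non-minimum form a high chain. For the other direction, suppose a low chain $I$ of length $k$ and a high chain $U$ of length $n-k+1$ coexisted; since $|I|+|U|=n+1$, the potential $|I\cap[1,p]|+|U\cap[1,p]|-p$ runs from $0$ to $1$ as $p:0\to n$ and first reaches $1$ at some $w\in I\cap U$, forcing $s+t-w=1$ where $s,t$ are the left-ranks of $w$ in $I,U$; but the two chain inequalities at $w$ give $s \le k-c_w \le w-t$, hence $s+t\le w$, a contradiction. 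Combining, the third bullet holds iff $P=k$, which simultaneously gives the forward containment and the surjectivity of $\Phi$ onto $\OP_{n,k}^{(r)}$.

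The main obstacle is precisely this last equivalence: pinning down the reindexing that identifies the skip-composition condition $\gamma(S)^* \le \code(\sigma)$ with the high-chain condition, and then proving the low/high-chain dichotomy. The greedy half is routine bookkeeping, but the reverse half --- that the two chains cannot coexist --- is where the arithmetic $k+(n-k+1)=n+1$ does the real work, and locating the right potential (or equivalently a Hall-type) argument to exploit it is the crux; everything else reduces to the two range estimates and the monotonicity facts of the first two paragraphs.
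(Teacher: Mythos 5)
Your proposal is correct, and its skeleton parallels the paper's: both prove the theorem by verifying the three bullets for genuine codes and then building an explicit inverse, and your scan (declare $i$ a block minimum precisely when $c_i < k-p$) is literally the same map as the paper's insertion procedure $\iota$, since at the corresponding stage of $\iota$ there are exactly $k-p$ empty blocks carrying the labels $0,1,\dots,k-p-1$. Your greedy half of the dichotomy is also essentially the paper's: the paper takes $n-k+1$ non-minimal letters of a deficient reconstruction and shows the associated $\gamma(S)^*$ is dominated by $(c_1,\dots,c_n)$, which is exactly your statement that the first $n-k+1$ non-minima encountered by the scan form a high chain. The genuine divergence is in the other half --- that the code of an actual $\sigma\in\OP_{n,k}^{(r)}$ violates no $\gamma(S)^*$. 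The paper argues by downward induction along the reversed set $T=\{t_1<\cdots<t_{n-k+1}\}$: assuming $\gamma(S)^*\le\code(\sigma)$, the letters $t_{n-k+1}$, then $t_{n-k}$, and so on are successively forced to be non-minimal in their blocks, until all $n-k+1$ of them are, contradicting that $\sigma$ has only $n-k$ non-minimal letters. You instead prove a symmetric non-coexistence lemma: the $k$ block minima form a low chain, a violating $S$ gives a high chain of length $n-k+1$ (your reindexing $c_{u_t}\ge k-u_t+t$ is the correct translation of $\gamma(S)^*\le(c_1,\dots,c_n)$), and the potential $|I\cap[1,p]|+|U\cap[1,p]|-p$ together with the arithmetic $k+(n-k+1)=n+1$ rules out coexistence. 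The mathematical content is the same --- the paper's induction is in effect showing that a high chain forces non-minimality position by position --- but your version buys a single clean dichotomy from which well-definedness of $\code$, surjectivity of the reconstruction, and (via your two range estimates, which tile $\{0,\dots,k-1\}$ and make the placement of each non-minimum unique) the mutual-inverse check all follow; the paper's route is shorter but leaves that last check to the reader.
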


\begin{proof}
Let $\CCC_{n,k}^{(r)}$ be the family of length $n$ sequences 
of nonnegative integers which satisfy the three conditions in the statement 
of the theorem.  Let $\sigma \in \OP_{n,k}^{(r)}$ with $\code(\sigma) = (c_1, \dots, c_n)$.
We show that $(c_1, \dots, c_n) \in \CCC_{n,k}^{(r)}$, so that the function
$\code: \OP_{n,k}^{(r)} \rightarrow \CCC_{n,k}^{(r)}$ is well-defined.  This is verified as follows.
\begin{itemize}
\item  For any $1 \leq i \leq n$, the block $B$ of $\sigma$ containing $i$ cannot contribute
to $c_i$, whereas each block $\neq B$ can contribute at most $1$ to $c_i$.
Consequently, we have $c_i < k$.
\item  Since $\sigma$ is $r$-Stirling, the letters $1, 2, \dots, r$ are all minimal in their blocks.
In particular, if $1 \leq i \leq r$,  the blocks containing $1, 2, \dots, i-1$ cannot contribute to 
$c_i$, so that $c_i < k - i + 1$.
\item  Finally, let $S \subseteq [n]$ satisfy $|S| = n-k+1$.  We verify 
$\gamma(S)^* \not\leq (c_1, \dots, c_n)$.  Working towards a
contradiction, suppose $\gamma(S)^* \leq (c_1, \dots, c_n)$.

Write the reversal $T := \{ n-i+1 \,:\, i \in S\}$ of $S$ as $T = \{t_1 < \cdots < t_{n-k+1} \}$.  
Since $\sigma$ has $n$ letters
and $k$ blocks, {\em at least one element of $T$ must be minimal in its block of $\sigma$}.  
If $t_{n-k+1}$ is minimal
in its block of $\sigma$, then 
\begin{align}
c_{t_{n-k+1}} &= \left| \left\{ 
\ell > t_{n-k+1} \,:\, 
\begin{array}{c} \text{$\ell$ is minimal in its block and } \\ \text{occurs to the right of $t_{n-k+1}$ in $\sigma$} 
\end{array}
\right\} \right| \\
& \leq | \{t_{n-k+1} + 1, \dots, n-1, n\}| \\
& = n - t_{n-k+1}.
\end{align}
But the term of $\gamma(S)^*$ in position $t_{n-k+1}$ is $n - t_{n-k+1} + 1$.
We conclude that $t_{n-k+1}$ is not minimal in its block of $\sigma$.
If $t_{n-k}$ were minimal in its block of $\sigma$, then
\begin{align}
c_{t_{n-k}} &= \left| \left\{ \ell > t_{n-k} \,:\, 
\begin{array}{c}
\text{$\ell$ is minimal in its block and} \\ \text{occurs to the right of $t_{n-k}$ in $\sigma$} \end{array} 
\right\} \right| \\
& \leq | \{t_{n-k} + 1, \dots, n-1, n\} - \{t_{n-k+1} \}| \\
& = n - t_{n-k} - 1,
\end{align}
But the term of $\gamma(S)^*$ in position $t_{n-k}$ is $n - t_{n-k}$.
We conclude 
that $t_{n-k}$ is not minimal in its block of $\sigma$.  If $t_{n-k-1}$ were minimal in its block of $\sigma$, the same 
reasoning leads to the contradiction
$c_{t_{n-k-1}} < n - t_{n-k-1} - 1$, etc.  We see that 
{\em none of the elements in $T$
are minimal in their block of $\sigma$}, a contradiction.  
\end{itemize}

In order to show that $\code: \OP_{n,k}^{(r)} \rightarrow \CCC_{n,k}^{(r)}$ is a bijection, 
we construct its inverse.
As this inverse will be defined using an insertion procedure,
we denote it $\iota: \CCC_{n,k}^{(r)} \rightarrow \OP_{n,k}^{(r)}$.

Let 
$(B_1 \mid \cdots \mid B_k)$ be a sequence of
of $k$ possibly empty sets of positive integers. We define the {\em coinversion label}
of the sets $B_1, \dots, B_k$ by labeling the empty sets with $0, 1, \dots, j$ from right to left (where there are $j+1$
empty sets), and then labeling the nonempty sets with $j+1, j+2, \dots, k-1$ from left to right. 
An example of coinversion labels is as follows, displayed as superscripts:
\begin{equation*}
(  \varnothing^2 \mid 1 3^3 \mid \varnothing^1 \mid 2 5^4 \mid 4^5 \mid \varnothing^0).
\end{equation*}
By construction, each of the letters $0, 1, \dots, k-1$ appears exactly once as a coinversion label.

Let $(c_1, \dots, c_n) \in \CCC_{n,k}^{(r)}$.
Then $0 \leq c_i \leq k-1$ for $1 \leq i \leq n$.
We define $\iota(c_1, \dots, c_n) = (B_1 \mid \cdots \mid B_k)$ recursively by starting with the sequence 
$(\varnothing \mid \cdots \mid \varnothing)$ of $k$ copies of the empty set, 
and for $i = 1, 2, \dots, n$ 
inserting $i$ into the unique block with coinversion label $c_i$.
Here is an example of this procedure for $(n,k,r) = (9,4,3)$ and 
$(c_1, \dots, c_9) = (2,0,1,1,1,0,2,1,3)$:
\begin{center}
\begin{tabular}{c | c | c}
$i$ & $c_i$ & $\sigma$  \\ \hline
$1$ & $2$ & $(\varnothing^3 \mid \varnothing^2 \mid \varnothing^1 \mid \varnothing^0 )$ \\
$2$ & $0$ & $(\varnothing^2 \mid 1^3 \mid \varnothing^1 \mid \varnothing^0 )$  \\
$3$ & $1$ & $(\varnothing^1 \mid 1^2 \mid \varnothing^0 \mid 2^3 )$ \\
$4$ & $1$ & $(3^1 \mid 1^2 \mid \varnothing^0 \mid 2^3 )$ \\
$5$ & $1$ & $(34^1 \mid 1^2 \mid \varnothing^0 \mid 2^3 )$ \\
$6$ & $0$ & $(345^1 \mid 1^2 \mid \varnothing^0 \mid 2^3 )$ \\
$7$ & $2$ & $(345^0 \mid 1^1 \mid 6^2 \mid 2^3 )$ \\
$8$ & $1$ & $(345^0 \mid 18^1 \mid 67^2 \mid 2^3 )$ \\
$9$ & $3$ & $(345^0 \mid 18^1 \mid 67^2 \mid 29^3 )$ \\
\end{tabular}
\end{center}
We conclude  
$\iota(2,0,1,1,1,0,2,1,3) = (345 \mid 18 \mid 67 \mid 29)$.

We verify that $\iota$ is a well-defined function 
$\CCC^{(r)}_{n,k} \rightarrow \OP^{(r)}_{n,k}$.  Let 
$(c_1, \dots, c_n) \in \CCC_{n,k}^{(r)}$ and let 
$\iota(c_1,\dots,c_n) = (B_1 \mid \cdots \mid B_k) = \sigma$.
We must show that $1, 2, \dots, r$ lie in distinct blocks of $\sigma$ and that $\sigma$ does not have any 
empty blocks.

Suppose there exist $1 \leq i < j \leq r$ such that $i$ and $j$ 
belong to the same block of $\sigma$.
Choose the pair $(i,j)$ to be lexicographically minimal with this property and suppose 
$i, j \in B_{\ell}$.
Since the sequence $(B_1 \mid \cdots \mid B_k)$ consists of $j-1$ singletons and 
$k-j+1$ copies of the empty set when $j$ is inserted by $\iota$, the definition of $\iota$
and the fact that $j$ was added to a non-singleton block imply
$c_j  \geq k - j + 1$, which contradicts the assumption
$(c_1, \dots, c_n) \in \CCC_{n,k}^{(r)}$.
We conclude that $1, 2, \dots, r$ lie in different blocks of $\sigma$.

Now suppose that some of the blocks of $\sigma = (B_1 \mid \cdots \mid B_k)$ are empty.  This means that at least 
$n-k+1$ of the letters in $[n]$ are {\em not} minimal in their block of $\sigma$.  Let $S$ be the lexicographically 
{\em first} set of $n-k+1$ letters in $[n]$ which are not minimal in their blocks.  We will derive the contradiction $\gamma(S)^* \leq (c_1, \dots, c_n)$.

Indeed, write the reversal $T = \{n-i+1 \,:\, i \in S \}$ of $S$ as $T = \{t_1 < \cdots < t_{n-k+1} \}$.
Let $1 \leq i \leq n-k+1$.  By our choice of $S$, we know that the letters in the set difference
\begin{equation}
\{ t_i + 1, t_i + 2, \dots, n \} - \{ t_{i+1}, t_{i+2}, \dots, t_{n-k+1} \}
\end{equation}
are all minimal in their blocks of $\sigma$; 
this set has $(n - t_i) - (n-k+1 - i) = k - t_i + i - 1$ elements.
Consequently, since $\sigma$ contains at least one empty block, 
when the $\iota$ inserts $t_i$, there
are $ \geq k - t_i + i$ empty blocks. 
This forces $c_{t_i} \geq k - t_i + i + 1$.  Since $k - t_i + i + 1$ is the term of
$\gamma(S)^*$ in position $t_i$, we conclude $\gamma(S)^* \leq (c_1, \dots, c_n)$,
which contradicts the assumption that $(c_1, \dots, c_n) \in \CCC_{n,k}^{(r)}$.
Therefore, none of the blocks of $\sigma$ are empty and the function 
$\iota: \CCC_{n,k}^{(r)} \rightarrow \OP_{n,k}^{(r)}$ is well-defined.
We leave it for the reader to check that $\code$ and $\iota$ are mutually 
inverse.
\end{proof}

The $\code$ bijection of Theorem~\ref{coinversion-characterization} will have 
algebraic importance to the theory of Gr\"obner bases.  Recall that a total order $<$
on monomials in $\QQ[\xx_n]$ is called a {\em monomial order} if 
\begin{itemize}
\item  $1 \leq m$ for any monomial $m$, and
\item  if $m_1, m_2,$ and $m_3$ are monomials with $m_1 < m_2$, we have
$m_1 \cdot m_3 < m_2 \cdot m_3$.
\end{itemize}
In this paper, we will exclusively use the {\em negative lexicographical term order}
$\neglex$ defined by $x_1^{a_1} \cdots x_n^{a_n} < x_1^{b_1} \cdots x_n^{b_n}$ if and only if
there exists $1 \leq i \leq n$ such that $a_i < b_i$ and $a_{i+1} = b_{i+1}, \dots, a_n = b_n$.

If $<$ is any monomial order and $f \in \QQ[\xx_n]$ is nonzero, let $\initial_<(f)$ be the leading
term of $f$.  Furthermore, if $I \subseteq \QQ[\xx_n]$ is an ideal, the {\em initial ideal} is 
$\initial_<(I) := \langle \initial_<(f) \,:\, f \in I - \{ 0 \} \rangle$.  A finite subset 
$G = \{g_1, \dots, g_s\} \subset I$ is called a {\em Gr\"obner basis} if
$\initial_<(I) = \langle \initial_<(g_1), \dots, \initial_<(g_s) \rangle$.  If $G$ is a 
Gr\"obner basis for $I$, we necessarily have $I = \langle G \rangle$.   
Every ideal $I \subseteq \QQ[\xx_n]$ has a Gr\"obner basis (with respect to some fixed 
monomial order $<$).  

Let $I \subseteq \QQ[\xx_n]$ be an ideal and fix a monomial order $<$.
If $G = \{g_1, \dots, g_s\}$ is a Gr\"obner basis for $I$, the set of monomials
\begin{equation}
\{ m \,:\, \initial_<(f) \nmid m \text{ for all $f \in I - \{ 0 \}$} \} =
\{ m \,:\, \initial_<(g_i) \nmid m \text{ for $1 \leq i \leq s$} \}
\end{equation}
descends to a $\QQ$-vector space basis for $\QQ[\xx_n]/I$.  This is called the 
{\em standard  basis} of $\QQ[\xx_n]/I$.  After a monomial order is fixed,
any quotient $\QQ[\xx_n]/I$ has a unique standard basis.
The $\code$ map precisely describes the standard basis of $R_{n,k}^{(r)}$ in terms of 
ordered $r$-Stirling partitions.

\begin{theorem}
\label{coinversion-standard-basis}
Let $r \leq k \leq n$ and consider the set of monomials $\MMM_{n,k}^{(r)}$ given by
\begin{equation} \MMM_{n,k}^{(r)} =
\left \{
x_1^{c_1} x_2^{c_2} \cdots x_n^{c_n} \,:\,  
(c_1, c_2, \dots, c_n) = \code(\sigma) \text{ for some 
$\sigma \in \OP_{n,k}^{(r)}$}
\right \}.
\end{equation}

1. The set $\MMM_{n,k}^{(r)}$
is the standard  basis for the $\QQ$-vector space $R_{n,k}^{(r)}$ with respect
to the $\neglex$ monomial order.

2. The set $\MMM_{n,k}^{(r)}$
is a $\ZZ$-basis for the $\ZZ$-module $S_{n,k}^{(r)}$. 
\end{theorem}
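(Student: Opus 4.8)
The plan is to prove part~1 by sandwiching $\dim_\QQ R_{n,k}^{(r)}$ between two equal bounds and then identifying the standard monomials, and to deduce part~2 from an integrality (unit leading coefficient) check. By Theorem~\ref{coinversion-characterization}, the exponent sequences of the monomials in $\MMM_{n,k}^{(r)}$ are exactly the family $\CCC_{n,k}^{(r)}$, so $|\MMM_{n,k}^{(r)}| = |\OP_{n,k}^{(r)}| = k!\,\Stir_{n,k}^{(r)}$; the goal is to show this set equals the $\neglex$ standard basis.

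For the lower bound $\dim_\QQ R_{n,k}^{(r)} \ge |\OP_{n,k}^{(r)}|$ I would use orbit harmonics. Let $\mu_k \subset \CC$ be the group of $k$-th roots of unity and set
\[
Z_{n,k}^{(r)} = \{ (z_1,\dots,z_n) \in \mu_k^n : \{z_1,\dots,z_n\} = \mu_k \text{ and } z_1,\dots,z_r \text{ are distinct} \}.
\]
Sending a point to the surjection $i \mapsto z_i$ identifies $Z_{n,k}^{(r)}$ with $\OP_{n,k}^{(r)}$, so $|Z_{n,k}^{(r)}| = |\OP_{n,k}^{(r)}|$. Because $\dim_\CC \CC[\xx_n]/\mathrm{gr}\,I(Z_{n,k}^{(r)}) = |Z_{n,k}^{(r)}|$ for the vanishing ideal $I(Z_{n,k}^{(r)})$, it suffices to check that each generator of $I_{n,k}^{(r)}$ is the top-degree part of an element of $I(Z_{n,k}^{(r)})$; this yields a surjection $R_{n,k}^{(r)} \otimes \CC \twoheadrightarrow \CC[\xx_n]/\mathrm{gr}\,I(Z_{n,k}^{(r)})$ and hence the bound. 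The three checks are: $x_i^k - 1$ vanishes on $Z_{n,k}^{(r)}$ with top part $x_i^k$; dividing $\prod_{i=1}^n (t - x_i)$ by $t^k - 1$ in $\QQ[\xx_n][t]$ leaves a remainder whose coefficients vanish on $Z_{n,k}^{(r)}$ (since $t^k - 1$ divides $\prod_i (t - z_i)$) and whose top-degree parts are $\pm e_n(\xx_n), \dots, \pm e_{n-k+1}(\xx_n)$; and, because $z_1,\dots,z_r$ are distinct elements of $\mu_k$, the factor $\prod_{i=1}^r (1 - z_i s)$ divides $1 - s^k$, forcing $h_d(z_1,\dots,z_r) = 0$ for $k-r+1 \le d \le k-1$ and $h_k(z_1,\dots,z_r) = 1$, so that $h_d(\xx_r)$ and $h_k(\xx_r) - 1$ lie in $I(Z_{n,k}^{(r)})$ with top parts the remaining generators.

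For the upper bound and the standard-basis identification I would show that every monomial $x^c$ with $c \notin \CCC_{n,k}^{(r)}$ lies in $\initial_{\neglex}(I_{n,k}^{(r)})$; this places the standard monomials inside $\MMM_{n,k}^{(r)}$ and, as the standard monomials already span, shows $\MMM_{n,k}^{(r)}$ spans $R_{n,k}^{(r)}$. Such a $c$ violates one of the three conditions of Theorem~\ref{coinversion-characterization}. If some $c_i \ge k$ with $i > r$, then $x_i^k \in I_{n,k}^{(r)}$ has leading term dividing $x^c$. If some $c_i \ge k - i + 1$ with $i \le r$, I would use that $h_{k-i+1}(\xx_i) \in I_{n,k}^{(r)}$: this follows by downward induction from the generators $h_{k-r+1}(\xx_r),\dots,h_k(\xx_r)$ via $h_d(\xx_{i-1}) = h_d(\xx_i) - x_i\, h_{d-1}(\xx_i)$, and $h_{k-i+1}(\xx_i)$ has $\neglex$-leading term $x_i^{k-i+1}$, which divides $x^c$. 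The remaining case, $\gamma(S)^* \le c$ for some $(n-k+1)$-subset $S$, involves neither $r$ nor the $h$-generators, and is handled exactly as in the $r=0$ theory of \cite{HRS}: one produces elements of $I_{n,k} \subseteq I_{n,k}^{(r)}$, built from $e_{n-k+1}(\xx_n),\dots,e_n(\xx_n)$ and the $x_i^k$, whose $\neglex$-leading monomials divide $x^{\gamma(S)^*}$. Combining the spanning upper bound $\dim_\QQ R_{n,k}^{(r)} \le |\MMM_{n,k}^{(r)}|$ with the orbit-harmonics lower bound gives equality; since the number of standard monomials equals $\dim_\QQ R_{n,k}^{(r)} = |\MMM_{n,k}^{(r)}|$ and they form a subset of $\MMM_{n,k}^{(r)}$, the two sets coincide, proving part~1.

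Part~2 follows by tracking leading coefficients. Every ideal element used above — the powers $x_i^k$, the elementary-symmetric combinations, the elements $h_{k-i+1}(\xx_i)$, and the condition-(iii) elements — has $\neglex$-leading coefficient $\pm 1$, so the reduction of an arbitrary monomial to a combination of $\MMM_{n,k}^{(r)}$ can be carried out over $\ZZ$ using the generators of $J_{n,k}^{(r)}$; hence $\MMM_{n,k}^{(r)}$ spans $S_{n,k}^{(r)}$ as a $\ZZ$-module. Since $J_{n,k}^{(r)} \subseteq I_{n,k}^{(r)}$, any $\ZZ$-linear dependence among the images of $\MMM_{n,k}^{(r)}$ in $S_{n,k}^{(r)}$ would descend to a $\QQ$-linear dependence in $R_{n,k}^{(r)}$, impossible by part~1; thus $\MMM_{n,k}^{(r)}$ is a $\ZZ$-basis and $S_{n,k}^{(r)}$ is free of rank $|\OP_{n,k}^{(r)}|$. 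I expect the main obstacle to be the condition-(iii) reduction: constructing, for each subset $S$, an explicit element of the ideal whose leading term divides $x^{\gamma(S)^*}$ and has unit leading coefficient. This is the one step not visibly governed by a single named generator, and it is where the combinatorics of the skip compositions $\gamma(S)$ must be exploited.
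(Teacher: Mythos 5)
Your proposal is correct and takes essentially the same approach as the paper: an orbit-harmonics lower bound (the paper places its points at $k$ distinct rational numbers where you use the $k$-th roots of unity, an immaterial difference), an upper bound from the $\neglex$-leading monomials $x_i^k$ for $i > r$, $x_i^{k-i+1}$ for $i \leq r$ (via the same identity on homogeneous symmetric polynomials), and the reverse skip monomials $\xx(S)^*$ imported from \cite{HRS}, with part 2 deduced by $\ZZ$-reduction using unit leading coefficients plus independence inherited from part 1. The ``condition-(iii) elements'' you flag as the main obstacle are exactly the Demazure characters $\kappa_{\gamma(S)^*}(\xx_n)$, which \cite{HRS} shows lie in $J_{n,k} \subseteq J_{n,k}^{(r)}$ with leading term $\xx(S)^*$ and integer lower-order terms, precisely as the paper cites.
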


\begin{proof} 1.
We begin by proving the inequality $\dim(R_{n,k}^{(r)}) \geq | \OP_{n,k}^{(r)} |$. 
Consider $k$ distinct rational numbers 
$\alpha_1, \dots, \alpha_k$ and let $Y_{n,k}^{(r)} \subset \QQ^n$ be the family of points
$(y_1, \dots, y_n)$ such that 
\begin{itemize}
\item $\{ y_1, \dots, y_n\} = \{\alpha_1, \dots, \alpha_k\}$, and
\item the coordinates $y_1, \dots, y_r$ are distinct.
\end{itemize}
It is evident that $Y_{n,k}^{(r)}$ carries an action of the symmetric group product
$S_r \times S_{n-r}$, and that this affords an identification of $Y_{n,k}^{(r)}$ with 
$\OP_{n,k}^{(r)}$.

Let $\II(Y_{n,k}^{(r)}) \subseteq \QQ[\xx_n]$ be the ideal of polynomials in $\QQ[\xx_n]$ which
vanish on $Y_{n,k}^{(r)}$.  We have
\begin{equation}
\QQ[\xx_n]/\II(Y_{n,k}^{(r)}) \cong \QQ[Y_{n,k}^{(r)}] \cong \QQ[\OP_{n,k}^{(r)}]
\end{equation}
as  $S_r \times S_{n-r}$-modules.  If $f \in \II(Y_{n,k}^{(r)})$ is nonzero, let
$\tau(f)$ denote the homogeneous component of $f$ of highest degree and set
\begin{equation}
\TT(Y_{n,k}^{(r)}) := \langle \tau(f) \,:\, f \in \II(Y_{n,k}^{(r)}) - \{ 0 \} \rangle.
\end{equation}
We have the further $S_r \times S_{n-r}$-module isomorphism
\begin{equation}
\QQ[\xx_n]/\TT(Y_{n,k}^{(r)}) \cong 
\QQ[\xx_n]/\II(Y_{n,k}^{(r)}) \cong \QQ[Y_{n,k}^{(r)}] \cong \QQ[\OP_{n,k}^{(r)}].
\end{equation}
Proving the dimension inequality $\dim(R_{n,k}^{(r)}) \geq |\OP_{n,k}^{(r)}|$ therefore reduces
to showing the containment $I_{n,k}^{(r)} \subseteq \TT(Y_{n,k}^{(r)})$; we do this by considering 
the generators of $I_{n,k}^{(r)}$.
\begin{itemize}
\item  Let $1 \leq i \leq n$; we show that the monomial $x_i^k$ lies in $\TT(Y_{n,k}^{(r)})$.
This follows from the fact that 
$(x_i - \alpha_1) (x_i - \alpha_2) \cdots (x_i - \alpha_k) \in \II(Y_{n,k}^{(r)})$.
\item  We show that $e_n(\xx_n), e_{n-1}(\xx_n), \dots, e_{n-k+1}(\xx_n) \in \TT(Y_{n,k}^{(r)})$.  
Indeed, introduce
a new variable $t$ and consider the rational function
\begin{equation}
\frac{(1 - x_1 t) \cdots (1 - x_n t)}{(1 - \alpha_1 t) \cdots (1 - \alpha_k t)} =
\sum_{i,j} (-1)^i e_i(\xx_n) h_j(\alpha_1, \dots, \alpha_k) \cdot t^{i+j}.
\end{equation}
If $(x_1, \dots, x_n) \in Y_{n,k}^{(r)}$ the factors of the denominator cancel with $k$ factors
in the numerator, yielding a polynomial in $t$ of degree $n-k$.  If $n-k+1 \leq i \leq n$, taking
the coefficient of $t^i$ on both sides leads to $e_i(\xx_n) \in \TT(Y_{n,k}^{(r)})$.
\item
\begin{equation}
\frac{(1 - \alpha_1 t) \cdots (1 - \alpha_k t)}{(1 - x_1 t) \cdots (1 - x_r t)} =
\sum_{i,j} (-1)^i e_i(\alpha_1, \dots, \alpha_k) h_j(\xx_r) \cdot t^{i+j}.
\end{equation}
If $(x_1, \dots, x_n) \in Y_{n,k}^{(r)}$, the factors in the denominator cancel with $r$
factors in the numerator, yielding a polynomial in $t$ of degree $k-r$.  If 
$k-r+1 \leq j \leq k$, taking the coefficient of $t^i$ on both sides leads to
$h_j(\xx_r) \in \TT(Y_{n,k}^{(r)})$.
\end{itemize}
This completes the proof that $\dim(R_{n,k}^{(r)}) \geq |\OP_{n,k}^{(r)}|$.

Given any subset $S \subseteq [n]$ with reverse skip composition 
$\gamma(S)^* = (a_1, \dots, a_n)$, let $\xx(S)^* := x_1^{a_1} \cdots x_n^{a_n}$ be the 
associated {\em reverse skip monomial}.  By \cite[Sec. 3]{HRS}, we have
$\xx(S)^* \in \initial_<(I_{n,k}^{(r)})$ whenever $S \subseteq [n]$ satisfies $|S| = n-k+1$.
Furthermore, the identities
\begin{equation}
\label{homogeneous-identity}
h_d(x_1, \dots, x_{i-1}, x_i) - x_i h_{d-1}(x_1, \dots, x_{i-1}, x_i) =
h_d(x_1, \dots, x_{i-1})
\end{equation}
imply that $x_1^k, x_2^{k-1}, \dots, x_r^{k-r-1} \in \initial_<(I_{n,k}^{(r)})$.  Finally, we have
$x_{r+1}^k, \dots, x_{n-1}^k, x_n^k \in \initial_<(I_{n,k}^{(r)})$.
Theorem~\ref{coinversion-characterization} implies that the monomials in $\MMM_{n,k}^{(r)}$
are precisely those monomials in $\QQ[\xx_n]$ which are not divisible by any of the three 
classes of elements of $\initial_<(I_{n,k}^{(r)})$ listed above.   Again by
Theorem~\ref{coinversion-characterization} we have 
$\dim(R_{n,k}^{(r)}) \geq |\OP_{n,k}^{(r)}| = |\MMM_{n,k}^{(r)}|$, so that 
$\MMM_{n,k}^{(r)}$ is the standard  basis of $R_{n,k}^{(r)}$.  

2. From Item 1 of this theorem, we know that the set $\MMM_{n,k}^{(r)}$ descends to a linearly
independent subset of $S_{n,k}^{(r)}$; we need only show that $\MMM_{n,k}^{(r)}$ descends
to a $\ZZ$-spanning set of $S_{n,k}^{(r)}$.  To this end, let $m$ be any monomial in
$\ZZ[\xx_n]$.  We show inductively
that $m + J_{n,k}^{(r)}$ lies in the $\ZZ$-span of $\MMM_{n,k}^{(r)}$.  If 
$m \in \MMM_{n,k}^{(r)}$ this is obvious.  Otherwise, one of the following three things
must be true:
\begin{enumerate}
\item  There exists $1 \leq i \leq r$ such that $x_i^{k-i+1} \mid m$.
\item  There exists $r+1 \leq i \leq n$ such that $x_i^k \mid m$.
\item  There exists $S \subseteq [n]$ with $|S| = n-k+1$ such that $\xx(S)^* \mid m$.
\end{enumerate}

If (1) holds, Equation~\eqref{homogeneous-identity} implies  
$h_{k-i+1}(x_1, x_2, \dots, x_i) \in J_{n,k}^{(r)}$.  As a consequence, we have
\begin{equation}
x_i^{k-i+1} \equiv \text{a $\ZZ$-linear combination of monomials $< x_i^{k-i+1}$ in
$\neglex$} \, \, \,
\text{(mod $J_{n,k}^{(r)}$)}.
\end{equation}
If we multiply through by the monomial $m/x_i^{k-i+1}$, we see that
\begin{equation}
m \equiv \text{a $\ZZ$-linear combination of monomials $< m$ in
$\neglex$} \, \, \,
\text{(mod $J_{n,k}^{(r)}$)},
\end{equation}
so that inductively we see that $m + J_{n,k}^{(r)}$ lies in the span of $\MMM_{n,k}^{(r)}$.

If (2) holds, then $m \in J_{n,k}^{(r)}$, so certainly $m + J_{n,k}^{(r)} = 0$ lies in the
$\ZZ$-span of $\MMM_{n,k}^{(r)}$.

If (3) holds, let $\kappa_{\gamma(S)^*}(\xx_n) \in \ZZ[\xx_n]$ be the 
{\em Demazure character} attached to the reverse skip composition $\gamma(S)^*$. 
This is a certain polynomial in the variables $x_1, \dots, x_n$ with nonnegative integer
coefficients.  The precise form of this polynomial is not important for us, but we have
(see e.g. \cite[Lem. 3.5]{HRS})
\begin{equation}
\label{demazure-form}
\kappa_{\gamma(S)^*}(\xx_n) = \xx(S)^* +
\text{a $\ZZ$-linear combination of terms $< \xx(S)^*$ in $\neglex$}.
\end{equation}
By \cite[Lem 3.4]{HRS} we have $\kappa_{\gamma(S)^*}(\xx_n) \in J_{n,k}^{(r)}$, so that
Equation~\eqref{demazure-form} implies
\begin{equation}
\label{demazure-congruence}
\xx(S)^* \equiv \text{a $\ZZ$-linear combination of terms $< \xx(S)^*$ in $\neglex$} \, \, \,
\text{(mod $J_{n,k}^{(r)}$)}.
\end{equation}
If we multiply Equation~\eqref{demazure-congruence} through by the monomial
$m/\xx(S)^*$, we get
\begin{equation}
m \equiv \text{a $\ZZ$-linear combination of terms $< m$ in $\neglex$} \, \, \,
\text{(mod $J_{n,k}^{(r)}$)}.
\end{equation}
so that inductively we see that $m + J_{n,k}^{(r)}$ lies in the $\ZZ$-span of 
$\MMM_{n,k}^{(r)}$.
\end{proof}

When $r = 0$, Theorem~\ref{coinversion-standard-basis} is equivalent to a result of 
Haglund, Rhoades, and Shimozono \cite[Thm. 4.13]{HRS}.  However, the proof
of Theorem~\ref{coinversion-standard-basis} is much more direct that 
of \cite[Thm. 4.13]{HRS} (and those in \cite[Sec. 4]{HRS} in general); whereas 
we associate an explicit standard basis element $x_1^{c_1} \cdots x_n^{c_n}$ to 
any ordered set partition $\sigma$, the description of the standard bases in \cite{HRS}
is recursive in nature.  We exhibit this link between ordered set partitions and 
standard basis elements with an example.

\begin{example}
To illustrate Theorem~\ref{coinversion-standard-basis},
we give the standard  basis of $R_{4,3}^{(2)}$ with respect to $\neglex$.
\begin{footnotesize}
\begin{center}
\begin{tabular}
{c | c | c}
$\sigma$ & $\code(\sigma)$ & $\mathrm{monomial} $  \\  \hline 
$(1 \mid 2 \mid 34)$ & $(2, 1, 0, 2)$ & $x_1^2 x_2 x_4^2$ \\
$(1 \mid 24 \mid 3)$ & $(2,1,0,1)$ & $x_1^2 x_2 x_4$ \\
$(14 \mid 2 \mid 3)$ & $(2,1,0,0)$ & $x_1^2 x_2$ \\
$(1 \mid 23 \mid 4)$ & $(2,1,2,0)$ & $x_1^2 x_2 x_3^2$ \\
$(13 \mid 2 \mid 4)$ & $(2,1,1,0)$ & $x_1^2 x_2 x_3$ \\
$(2 \mid 1 \mid 34)$ & $(1,1,0,2)$ & $x_1 x_2 x_4^2$ \\
$(2 \mid 14 \mid 3)$ & $(1,1,0,1)$ & $x_1 x_2 x_4$ \\
$(24 \mid 1 \mid 3)$ & $(1,1,0,0)$ & $x_1 x_2$ \\
$(2 \mid 13 \mid 4)$ & $(1,1,2,0)$ & $x_1 x_2 x_3^2$ \\
$(23 \mid 1 \mid 4)$ & $(1,1,1,0)$ & $x_1 x_2 x_3$ \\
\end{tabular} 
\quad
\begin{tabular}
{c | c | c}
$\sigma$ & $\code(\sigma)$ & $\mathrm{monomial}$   \\  \hline 
$(1 \mid 34 \mid 2)$ & $(2,0,0,1)$ & $x_1^2 x_4$ \\
$(1 \mid 3 \mid 24)$ & $(2,0,0,2)$ & $x_1^2 x_4^2$ \\
$(14 \mid 3 \mid 2)$ & $(2,0,0,0)$ & $x_1^2$ \\
$(1 \mid 4 \mid 23)$ & $(2,0,2,0)$ & $x_1^2 x_3^2$ \\
$(13 \mid 4 \mid 2)$ & $(2,0,1,0)$ & $x_1^2 x_3$ \\
$(2 \mid 34 \mid 1)$ & $(0,1,0,1)$ & $x_2 x_4$ \\
$(2 \mid 3 \mid 14)$ & $(0,1,0,2)$ & $x_2 x_4^2$ \\
$(24 \mid 3 \mid 1)$ & $(0,1,0,0)$ & $x_2$ \\
$(2 \mid 4 \mid 13)$ & $(0,1,2,0)$ & $x_2 x_3^2$ \\
$(23 \mid 4 \mid 1)$ & $(0,1,1,0)$ & $x_2 x_3$ \\
\end{tabular} 
\quad
\begin{tabular}
{c | c | c}
$\sigma$ & $\code(\sigma)$ & $\mathrm{monomial}$  \\  \hline 
$(34 \mid 1 \mid 2)$ & $(1,0,0,0)$ & $x_1$ \\
$(3 \mid 14 \mid 2)$ & $(1,0,0,1)$ & $x_1 x_4$ \\
$(3 \mid 1 \mid 24)$ & $(1,0,0,2)$ & $x_1 x_4^2$ \\
$(4 \mid 13 \mid 2)$ & $(1,0,1,0)$ & $x_1 x_3$ \\
$(4 \mid 1 \mid 23)$ & $(1,0,2,0)$ & $x_1 x_3^2$ \\
$(34 \mid 2 \mid 1)$ & $(0,0,0,0)$ & $1$ \\
$(3 \mid 24 \mid 1)$ & $(0,0,0,1)$ & $x_4$ \\
$(3 \mid 2 \mid 14)$ & $(0,0,0,2)$ & $x_4^2$ \\
$(4 \mid 23 \mid 1)$ & $(0,0,1,0)$ & $x_3$ \\
$(4 \mid 2 \mid 13)$ & $(0,0,2,0)$ & $x_3^2$ \\
\end{tabular} 
\end{center}
\end{footnotesize}
\end{example}

As an application of Theorem~\ref{coinversion-standard-basis}, we can describe the Hilbert
series of $R_{n,k}^{(r)}$ in terms of the $\coinv$ statistic.

\begin{corollary}
\label{hilbert}
The Hilbert series of $R_{n,k}^{(r)}$ is given by
\begin{equation}
\Hilb(R_{n,k}^{(r)};q) =
\sum_{\sigma \in \OP_{n,k}^{(r)}} q^{\coinv(\sigma)}.
\end{equation}
\end{corollary}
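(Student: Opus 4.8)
The plan is to read the Hilbert series straight off the standard monomial basis furnished by Theorem~\ref{coinversion-standard-basis}, so that the entire statement becomes a matter of bookkeeping the degrees of the basis monomials. Since the Hilbert series of a graded ring records the $\QQ$-dimensions of its graded pieces, and a homogeneous basis of a graded ring refines into bases of each graded component, the computation reduces to computing $\deg(m)$ for each $m \in \MMM_{n,k}^{(r)}$ and recognizing the answer as $\coinv$.

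First I would observe that $R_{n,k}^{(r)}$ is genuinely a graded ring, so that $\Hilb(R_{n,k}^{(r)}; q) = \sum_{d \geq 0} \dim_{\QQ}(R_{n,k}^{(r)})_d \cdot q^d$ is well-defined. This holds because every generator of $I_{n,k}^{(r)}$, namely each $x_i^k$, each $e_{n-j}(\xx_n)$, and each $h_{k-r+j}(\xx_r)$, is homogeneous; hence $I_{n,k}^{(r)}$ is a homogeneous ideal and the quotient inherits the grading from $\QQ[\xx_n]$. Next I would invoke Theorem~\ref{coinversion-standard-basis}, which tells us that $\MMM_{n,k}^{(r)}$ is a $\QQ$-basis of $R_{n,k}^{(r)}$. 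Because each element of $\MMM_{n,k}^{(r)}$ is a single monomial and therefore homogeneous, this basis is compatible with the grading: the monomials of degree $d$ lying in $\MMM_{n,k}^{(r)}$ descend to a basis of $(R_{n,k}^{(r)})_d$. It follows immediately that $\Hilb(R_{n,k}^{(r)}; q) = \sum_{m \in \MMM_{n,k}^{(r)}} q^{\deg(m)}$.

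Finally I would translate degrees into the coinversion statistic. By the very definition of $\MMM_{n,k}^{(r)}$, each of its monomials has the form $x_1^{c_1} \cdots x_n^{c_n}$ with $(c_1, \dots, c_n) = \code(\sigma)$ for a unique $\sigma \in \OP_{n,k}^{(r)}$, uniqueness being the bijectivity asserted in Theorem~\ref{coinversion-characterization}. The degree of this monomial is $c_1 + \cdots + c_n$, which is exactly $\coinv(\sigma)$ by Proposition~\ref{coinv-decomposition}. Summing over all $\sigma$ then gives $\Hilb(R_{n,k}^{(r)}; q) = \sum_{\sigma \in \OP_{n,k}^{(r)}} q^{\coinv(\sigma)}$, as desired. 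I do not expect any serious obstacle here: the statement is a corollary precisely because the difficult combinatorial and algebraic work is already complete. The only point meriting explicit mention is the compatibility of the standard basis with the grading, which rests entirely on the homogeneity of the generators of $I_{n,k}^{(r)}$; once that is noted, everything else is the degree-to-$\coinv$ dictionary provided by Proposition~\ref{coinv-decomposition}.
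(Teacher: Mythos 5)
Your proposal is correct and is exactly the argument the paper intends: the corollary follows immediately from the monomial (hence homogeneous) standard basis $\MMM_{n,k}^{(r)}$ of Theorem~\ref{coinversion-standard-basis}, with degrees converted to $\coinv$ via Proposition~\ref{coinv-decomposition}. The paper leaves this proof implicit, and your only addition---explicitly noting that $I_{n,k}^{(r)}$ is homogeneous so the grading descends---is a sound and harmless piece of bookkeeping.
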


As another application of Theorem~\ref{coinversion-standard-basis}, we can
describe the ungraded isomorphism type of $R_{n,k}^{(r)}$ as a module over
$S_r \times S_{n-r}$.
When $r = k = n$, this is 
Chevalley's classical result \cite{C} 
that the coinvariant ring is isomorphic to the regular representation
of $S_n$.

\begin{corollary}
\label{chevalley}
We have an isomorphism of ungraded $S_r \times S_{n-r}$-modules
\begin{equation}
R_{n,k}^{(r)} \cong \QQ[\OP_{n,k}^{(r)}].
\end{equation}
\end{corollary}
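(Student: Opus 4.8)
The plan is to reuse the machinery already assembled in the proof of Theorem~\ref{coinversion-standard-basis}. There we introduced the finite $S_r \times S_{n-r}$-stable point set $Y_{n,k}^{(r)} \subseteq \QQ^n$, which is $S_r \times S_{n-r}$-equivariantly identified with $\OP_{n,k}^{(r)}$, and we recorded the chain of $S_r \times S_{n-r}$-module isomorphisms
\[
\QQ[\xx_n]/\TT(Y_{n,k}^{(r)}) \cong \QQ[\xx_n]/\II(Y_{n,k}^{(r)}) \cong \QQ[Y_{n,k}^{(r)}] \cong \QQ[\OP_{n,k}^{(r)}].
\]
The first of these is the standard fact that passing to the top-degree associated graded of the degree-filtered $S_r \times S_{n-r}$-module $\QQ[\xx_n]/\II(Y_{n,k}^{(r)})$ preserves its ungraded isomorphism type, since over $\QQ$ every filtration of a finite-dimensional $S_r \times S_{n-r}$-module splits by Maschke's theorem. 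Thus it suffices to produce an $S_r \times S_{n-r}$-module isomorphism $R_{n,k}^{(r)} \cong \QQ[\xx_n]/\TT(Y_{n,k}^{(r)})$.

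First I would check that the defining ideal $I_{n,k}^{(r)}$ is stable under $S_r \times S_{n-r}$. This is immediate from its generators: the powers $x_1^k, \dots, x_n^k$ and the elementary symmetric polynomials $e_n(\xx_n), \dots, e_{n-k+1}(\xx_n)$ are fixed by all of $S_n$, while each $h_d(\xx_r)$ involves only $x_1, \dots, x_r$, is symmetric in those variables, and is untouched by the $S_{n-r}$ permuting $x_{r+1}, \dots, x_n$. Hence the generating set, and therefore $I_{n,k}^{(r)}$ itself, is $S_r \times S_{n-r}$-stable, so $R_{n,k}^{(r)}$ is a (graded) $S_r \times S_{n-r}$-module and the comparison of the previous paragraph makes sense equivariantly.

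Next I would invoke the containment $I_{n,k}^{(r)} \subseteq \TT(Y_{n,k}^{(r)})$ established in the proof of Theorem~\ref{coinversion-standard-basis}, which yields a surjection of $S_r \times S_{n-r}$-modules $R_{n,k}^{(r)} = \QQ[\xx_n]/I_{n,k}^{(r)} \twoheadrightarrow \QQ[\xx_n]/\TT(Y_{n,k}^{(r)})$. Both sides are finite dimensional with the same dimension: by Theorem~\ref{coinversion-standard-basis} we have $\dim_{\QQ} R_{n,k}^{(r)} = |\MMM_{n,k}^{(r)}| = |\OP_{n,k}^{(r)}|$, and the displayed isomorphisms give $\dim_{\QQ} \QQ[\xx_n]/\TT(Y_{n,k}^{(r)}) = |Y_{n,k}^{(r)}| = |\OP_{n,k}^{(r)}|$. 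A surjection between equidimensional finite-dimensional vector spaces is an isomorphism (equivalently, $I_{n,k}^{(r)} = \TT(Y_{n,k}^{(r)})$), and composing with the chain above delivers $R_{n,k}^{(r)} \cong \QQ[\OP_{n,k}^{(r)}]$ as ungraded $S_r \times S_{n-r}$-modules.

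The only genuinely nonformal ingredient — the isomorphism between a filtered $G$-module and its associated graded — has already been packaged into the proof of Theorem~\ref{coinversion-standard-basis}, so the remaining work is bookkeeping: the equivariance of every map and the dimension count. I do not expect a serious obstacle beyond confirming that each isomorphism in the chain respects the $S_r \times S_{n-r}$-action, which for the coordinate-ring and associated-graded identifications is built in by construction.
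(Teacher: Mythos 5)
Your proposal is correct and is essentially the paper's own (implicit) argument: the corollary is stated as an application of Theorem~\ref{coinversion-standard-basis}, whose proof already contains the equivariant chain $\QQ[\xx_n]/\TT(Y_{n,k}^{(r)}) \cong \QQ[\OP_{n,k}^{(r)}]$, the containment $I_{n,k}^{(r)} \subseteq \TT(Y_{n,k}^{(r)})$, and the dimension count $\dim R_{n,k}^{(r)} = |\OP_{n,k}^{(r)}|$ that forces the resulting surjection to be an isomorphism. Your added checks (stability of $I_{n,k}^{(r)}$ under $S_r \times S_{n-r}$, and the Maschke-type justification for the associated-graded step) are correct glosses on steps the paper takes for granted.
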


It seems that the isomorphism type of $R_{n,k}^{(r)}$ as a {\em graded}
$S_r \times S_{n-r}$-module can be described in terms of known
graded modules by the (graded) tensor product decomposition
\begin{equation}
\label{tensor-conjecture}
R_{n,k}^{(r)} \cong R_r \otimes_{\CC} \varepsilon_r R_{n,k}.
\end{equation}
In the conjectural isomorphism \eqref{tensor-conjecture} of graded $S_r \times S_{n-r}$-modules,
\begin{itemize}
\item  $R_r = \QQ[\xx_r]/\langle e_1(\xx_r), \dots, e_r(\xx_r)\rangle$ is the classical coinvariant
ring in the first $r$ variables $\xx_r$, with its graded action of $S_r$,
\item  $R_{n,k} = R_{n,k}^{(0)}$ is the graded $S_n$-module
$\QQ[\xx_n]/\langle x_1^k, \dots, x_n^k, e_n(\xx_n), \dots, e_{n-k+1}(\xx_n) \rangle$,  and
\item  $\varepsilon_r \in \QQ[S_n]$ is the group algebra element
\begin{equation}
\varepsilon_r := \sum_{w \in S_r} \mathrm{sign}(w) \cdot w 
\end{equation}
which antisymmetrizes
over the subgroup $S_r \subseteq S_n$ (acting on the first $r$ letters), so that $S_{n-r}$ 
(acting on the last $n-r$ letters) commutes with $\varepsilon_r$ and therefore
\item  $\varepsilon_r R_{n,k}$ is naturally a $S_{n-r}$-module, and 
\item the action of the product group $S_r \times S_{n-r}$ on the tensor product is given by
\begin{equation}
(w_1 \times w_2).(v_1 \otimes v_2) := (w_1.v_1) \otimes (w_2.v_2).
\end{equation}
\end{itemize}

\section{Line configurations and $r$-Stirling partitions}
\label{Line}

We shift focus from algebra to geometry and initiate the study of
$X_{n,k}^{(r)}$.
In order to study the variety $X_{n,k}^{(r)}$, we will need to break it into pieces 
in a reasonable way.  For this we will use the notion of an {\em affine paving} (called
a {\em cellular decomposition} in \cite{PR}).

 Let $X$ be a smooth irreducible complex algebraic variety.
An {\em affine paving} of $X$ is an ordered 
partition 
\begin{equation}
X = C_1 \sqcup \cdots \sqcup C_m 
\end{equation}
such that 
\begin{itemize}
\item  for all $i$, the union $C_1 \sqcup \cdots \sqcup C_i$ is a closed subvariety
of $X$, and
\item  $C_i$ is isomorphic as a variety to the affine space $\CC^{n_i}$, for some integer $n_i$.
\end{itemize}
The $C_i$ are referred to as the {\em cells} of the affine paving and we will say
that the partition $\{C_1, \dots, C_m \}$ {\em induces} an affine paving of $X$.
In this situation, the classes of the cell closures 
$\{ [\overline{C_1}], \dots, [\overline{C_m}] \}$ give a $\ZZ$-basis
for the (singular) cohomology ring $H^{\bullet}(X)$.

The projective space $\PP^{k-1}$ has an affine paving induced by the cells
$\{C_1, C_2, \dots, C_k\}$, where 
\begin{equation}
C_i = \{ [ x_1 : x_2 : \cdots : x_k ] \in \PP^{k-1} \,:\, 
x_1 = \cdots = x_{i-1} = 0 \text{ and } x_i \neq 0 \}.
\end{equation}
Taking products of these cells gives 
the standard affine paving of $(\PP^{k-1})^n$ whose cells are indexed
by words $w = w_1 \dots w_n \in [k]^n$.  Following \cite{PR}, we will consider 
a {\em different} affine paving of $(\PP^{k-1})^n$ whose cells are again indexed
by words in $[k]^n$.  In order to describe this paving, we will need some terminology.

Let $\Mat_{k \times n}$ stand for the affine space of all complex $k \times n$ matrices
$m$.
Let  $\UUU_{n,k}^{(r)}$ be the Zariski open subset
\begin{equation}
\UUU_{n,k}^{(r)} := \left\{
m \in \Mat_{k \times n} \,:\,
\begin{array}{c}
\text{the matrix $m$ has full rank, no zero} \\ \text{columns, and the first $r$ columns} \\
\text{of $m$ are linearly independent}
\end{array}
\right\}.
\end{equation}
If we let $T \subset GL_n$ be the rank $n$ diagonal torus, then $T$ acts freely on the columns
of $\UUU_{n,k}^{(r)}$ and we may identify the orbit space as 
$\UUU_{n,k}^{(r)}/T = X_{n,k}^{(r)}$.  Furthermore, we consider the larger Zariski open
set $\VVV_{n,k}$ given by
\begin{equation}
\VVV_{n,k} := \{ m \in \Mat_{k \times n} \,:\, \text{$m$ has no zero columns} \}.
\end{equation}
This time we have the identification $\VVV_{n,k}/T = (\PP^{k-1})^n$.

Let $w = w_1 \dots w_n \in [k]^n$ be a word in the letters $1, 2, \dots, k$ of length $n$.
An index $1 \leq j \leq n$ is called {\em initial} if $w_j$ is the first occurrence of its letter in $w$;
let $\initial(w) = \{j_1 < j_2 < \cdots < j_s\}$ be the set of initial indices in $w$.  For example,
if $w = 242141 \in [4]^6$ then $\initial(w) = \{1, 2, 4\}$.
The $k \times n$ {\em pattern matrix} $\PM(w)$ has entries in the set $\{0, 1, \star\}$ as follows:
\begin{equation}
\PM(w)_{i,j} = \begin{cases}
1 & \text{if $w_j = i$} \\
0 & \text{if the letter $i$ does not appear in $w$} \\
\star & \text{if $j \in \initial(w), i < w_j$, and there exists $j' < j$ such that $w_{j'} = i$} \\
0 & \text{if $j \in \initial(w)$ and ($i > w_j$ or there does not exist $j' < j$ such that 
$w_{j'} = i$)} \\
\star & \text{if $j \notin \initial(w), i \neq w_j,$ and the first $i$ appears before the 
first $w_j$ in $w$} \\
0 & \text{if $j \notin \initial(w), i \neq w_j,$ and the first $i$ appears after the first $w_j$ in $w$.}
\end{cases}
\end{equation}
In our example,
\begin{equation*}
\PM(242141) = \begin{pmatrix}
0 & 0 & 0 & 1 & 0 & 1 \\
1 & \star & 1 & 0 & \star & \star \\
0 & 0 & 0 & 0 & 0 & 0 \\
0 & 1 & 0 & 0 & 1 & \star
\end{pmatrix}.
\end{equation*}

For any word $w = w_1 \dots w_n \in [k]^n$,
let $\widehat{C_w}$ be the affine space of all matrices obtained by replacing the $\star$'s in
$\PM(w)$ by complex numbers. 
Let $U \subset GL_k(\CC)$ be the unipotent subgroup of {\em lower} triangular matrices
with $1$'s on the diagonal.
We define a subset $C_w \subseteq (\PP^{k-1})^n$ by
\begin{equation}
C_w := \text{image of $U \cdot \widehat{C_w}$ in $(\PP^{k-1})^n$.}
\end{equation}
It follows from \cite{PR} that $C_w$ is isomorphic as a variety to an affine space.

\begin{proposition}
\label{projective-paving}
(\cite{PR})
For any $k \leq n$, the set $\{ C_w \,:\, w \in [k]^n \}$ induces an affine paving of 
$(\PP^{k-1})^n$.
\end{proposition}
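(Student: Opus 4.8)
The plan is to verify the three defining properties of an affine paving for the family $\{C_w : w \in [k]^n\}$. The second property --- that each $C_w$ is isomorphic to an affine space --- is exactly the fact recorded immediately before the statement (following \cite{PR}), so I would take it as given; recall there that $C_w$ is the image of the affine space $U \cdot \widehat{C_w}$. The content to be supplied is therefore (i) that the cells are pairwise disjoint and cover $(\PP^{k-1})^n$, and (ii) that the cells admit an ordering under which every initial union is Zariski closed.

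For (i) I would produce a canonical matrix representative of each point and read the word off of it. Given $p = (\ell_1, \dots, \ell_n) \in (\PP^{k-1})^n$, lift it to a matrix $m \in \VVV_{n,k}$ and reduce $m$ by a Gaussian-elimination procedure that sweeps the columns from left to right, using the row operations available in the lower-triangular unipotent group $U$ together with the column rescalings in $T$. The key point, already visible in the definition of $\PM(w)$, is \emph{prefix-consistency}: the first $j$ columns of $\PM(w)$ coincide with $\PM(w_1 \cdots w_j)$, and in the reduction the shape of the first $j$ columns depends only on $\ell_1, \dots, \ell_j$, with column $j$ falling into a pivot position $w_j \in [k]$ determined by how $\ell_j$ sits relative to the span $\ell_1 + \cdots + \ell_{j-1}$ and the reference flag stabilized by $U$. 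This assigns to $p$ a well-defined word $w(p)$, and one checks that $U \cdot \widehat{C_w}$ is precisely the set of matrices whose reduction has shape $\PM(w)$; hence $p \in C_{w(p)}$ and in no other cell.

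For (ii) I would order the words by $\dim C_w$, breaking ties arbitrarily, and show that for each $d$ the union $\bigsqcup_{\dim C_w \le d} C_w$ is Zariski closed, equivalently that $\overline{C_w} \setminus C_w$ is a union of strictly lower-dimensional cells. The natural mechanism is semicontinuity of rank: membership in such a union is governed by the dimensions of the partial column-spans $\ell_1 + \cdots + \ell_j$ and of their intersections with the reference flag fixed by $U$, and these dimensions can only drop under specialization, so the defining conditions are the vanishing of appropriate minors and are therefore closed. Concretely, I would let the free $\star$-parameters and the $U$-parameters in $U \cdot \widehat{C_w}$ tend to their limits and check that the resulting configuration has canonical word $w'$ with $\dim C_{w'} < \dim C_w$.

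The main obstacle is this filtration step. Unlike the product paving of $(\PP^{k-1})^n$, here the $U$-action genuinely enlarges the naive image of $\widehat{C_w}$ --- already for $\PP^1$ the cell of the word $1$ is the whole affine line rather than a single point --- so cell dimensions are not merely counts of stars and the boundary behavior must be tracked through the entire $U$-sweep. I would either carry out the semicontinuity argument above via an explicit rank/determinantal description of the cells, or, as a cleaner conceptual route, identify $\{C_w\}$ with the Bia{\l}ynicki--Birula cells of a generic one-parameter subgroup of the diagonal torus of $GL_k$ acting diagonally on $(\PP^{k-1})^n$ --- whose isolated fixed points are exactly the coordinate-line tuples $(e_{w_1}, \dots, e_{w_n})$ indexed by $w \in [k]^n$ --- and invoke the Bia{\l}ynicki--Birula theorem, which furnishes the affine-cell structure and the closed filtration simultaneously, leaving only the matching of the two descriptions of $C_w$ to be checked.
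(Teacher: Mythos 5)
The paper itself contains no proof of this proposition---it is quoted directly from \cite{PR}, together with the preceding fact that each $C_w$ is an affine space---so your attempt has to stand on its own. Your part (i) points in a reasonable direction (a left-to-right canonical reduction is indeed how one analyzes these cells), but the invariants you name do not determine the word. For a non-initial column $j$, the letter $w_j$ records the smallest of the \emph{nested spans of the previously initial lines} containing $\ell_j$; it is not a function of the prefix span $\ell_1 + \cdots + \ell_{j-1}$ and the $U$-stable flag alone. Concretely, in $(\PP^1)^2 \times \PP^1$ with $\ell_1 = \mathrm{span}(1,0)$ and $\ell_2 = \mathrm{span}(1,1)$, the triples $(\ell_1,\ell_2,\ell_1)$ and $(\ell_1,\ell_2,\ell_2)$ lie in the distinct cells $C_{121}$ and $C_{122}$, yet all prefix-span dimensions and all intersections with the reference flag coincide for the two triples. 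The same deficiency undermines the determinantal description on which your semicontinuity argument for the filtration rests, so that route, as written, is a statement of what must be proved (that cell boundaries are unions of lower-dimensional cells) rather than a proof of it.

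The route you offer as the cleaner alternative is not merely incomplete but wrong. A one-parameter subgroup of the diagonal torus of $GL_k(\CC)$ acting diagonally on $(\PP^{k-1})^n$ acts on the $n$ factors independently, so limits as $t \to 0$ are computed coordinate-by-coordinate, and its Bia{\l}ynicki--Birula cells are exactly the products of one-factor attracting cells---that is, the \emph{standard} affine paving of $(\PP^{k-1})^n$, which the paper (and you, in your final paragraph) explicitly distinguish from $\{C_w\}$. For $k = n = 2$ the attracting cell of the fixed point $(\mathrm{span}(e_1), \mathrm{span}(e_1))$ is the two-dimensional product cell, whereas $C_{11}$ is the one-dimensional family of pairs of equal lines distinct from $\mathrm{span}(e_2)$; so $C_{11}$ is not a Bia{\l}ynicki--Birula cell of any such action. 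Indeed, no action in which the same group element is applied to every factor can produce cells, like these, that couple the factors; the $U$-sweep in the definition of $C_w$ is doing something genuinely different. Hence the Bia{\l}ynicki--Birula theorem cannot supply the closed filtration, and the proposal contains no proof of the closedness of the initial unions, which is the heart of the proposition.
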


The affine paving of Proposition~\ref{projective-paving} induces an affine paving of 
$X_{n,k}^{(r)}$.  To describe this paving, we define $\WWW_{n,k}^{(r)}$ to be the family
of words $w= w_1 w_2 \dots w_n \in [k]^n$ such that the letters $1, 2, \dots, k$ all appear in
$w$ and that the first $r$ letters $w_1, w_2, \dots, w_r$ of $w$ are distinct.

\begin{proposition}
\label{affine-paving}
The family of cells $\{ C_w \,:\, w \in \WWW_{n,k}^{(r)} \}$ induces an affine paving 
of the variety $X_{n,k}^{(r)}$.
\end{proposition}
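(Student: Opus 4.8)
The plan is to first reduce the statement to the set-theoretic identity $X_{n,k}^{(r)} = \bigsqcup_{w \in \WWW_{n,k}^{(r)}} C_w$, and then to deduce the affine paving from a general fact: an affine paving restricts to an affine paving of any Zariski open subset that is itself a union of cells. Concretely, I would fix the ordering $C_1, \ldots, C_m$ of all cells $\{C_w : w \in [k]^n\}$ provided by Proposition~\ref{projective-paving}, so that each initial union $C_1 \sqcup \cdots \sqcup C_i$ is closed in $(\PP^{k-1})^n$, and let $A \subseteq \{1, \ldots, m\}$ be the set of indices whose word lies in $\WWW_{n,k}^{(r)}$. Granting the identity together with the fact (stated in the excerpt) that $X_{n,k}^{(r)}$ is Zariski open, for every $i$ one has
\[
\Big( \bigsqcup_{j \le i} C_j \Big) \cap X_{n,k}^{(r)} = \bigsqcup_{\substack{j \le i \\ j \in A}} C_j ,
\]
and the left-hand side, being the intersection of a closed set with an open set, is closed in $X_{n,k}^{(r)}$. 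Since each $C_w$ is isomorphic to an affine space by \cite{PR}, the cells $\{C_w : w \in \WWW_{n,k}^{(r)}\}$, taken in the inherited order, then satisfy both axioms of an affine paving.

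It remains to establish the identity, which separates according to the two defining conditions of $\WWW_{n,k}^{(r)}$. A point of $C_w$ is represented by a matrix $M$, and $\dim(\ell_1 + \cdots + \ell_n)$ and $\dim(\ell_1 + \cdots + \ell_r)$ equal the rank of $M$ and the rank of its first $r$ columns, respectively. The spanning condition $\ell_1 + \cdots + \ell_n = \CC^k$ is governed solely by the set of letters occurring in $w$: this is exactly the $r = 0$ analysis of Pawlowski and Rhoades, which shows that $C_w \subseteq X_{n,k} = X_{n,k}^{(0)}$ when every letter $1, \ldots, k$ appears in $w$, and $C_w \cap X_{n,k} = \varnothing$ otherwise. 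Thus only the independence of the first $r$ columns remains to be analyzed. Writing a point of $C_w$ as $U \cdot \widehat{M}$ with $\widehat{M} \in \widehat{C_w}$ and $U$ lower-triangular unipotent, and using that left multiplication by an invertible matrix preserves linear independence of columns, it suffices to study the first $r$ columns of the star-filled pattern matrix $\widehat{M}$ uniformly in the values substituted for the stars.

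The main obstacle is the combinatorial claim that, for every such $\widehat{M}$, the first $r$ columns are linearly independent precisely when $w_1, \ldots, w_r$ are distinct; this requires a direct reading of the $1/0/\star$ pattern of $\PM(w)$. If $w_1, \ldots, w_r$ are distinct, then positions $1, \ldots, r$ are all initial, so column $j$ carries a $1$ in row $w_j$ and zeros in every row below $w_j$; since the pivot rows $w_1, \ldots, w_r$ are distinct, the column with the largest pivot row is the unique one among the first $r$ having a nonzero entry in that row, which forces its coefficient in any linear relation to vanish, and an induction on the pivots yields independence. If instead some letter repeats among $w_1, \ldots, w_r$, let $D$ be the set of distinct letters occurring among them, so that $|D| < r$; checking the pattern rules for both initial and non-initial columns $j \le r$ shows that each of the first $r$ columns is supported only on the rows indexed by $D$, whence these columns span a space of dimension at most $|D| < r$ and are dependent for every filling. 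Combining the two conditions gives $C_w \subseteq X_{n,k}^{(r)}$ exactly when $w \in \WWW_{n,k}^{(r)}$ and $C_w \cap X_{n,k}^{(r)} = \varnothing$ otherwise; intersecting with the cell partition $(\PP^{k-1})^n = \bigsqcup_w C_w$ then yields the identity and completes the proof.
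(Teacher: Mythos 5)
Your proof is correct and follows essentially the same route as the paper: show that each cell $C_w$ lies entirely inside $X_{n,k}^{(r)}$ when $w \in \WWW_{n,k}^{(r)}$ and is disjoint from it otherwise, then deduce the paving by restricting the ambient paving of Proposition~\ref{projective-paving}. The differences are only in level of detail --- the paper asserts the cell dichotomy directly from the definition of $\PM(w)$ and reorders the cells to place those with $w \notin \WWW_{n,k}^{(r)}$ first, whereas you verify the dichotomy explicitly from the $1/0/\star$ pattern and keep the inherited order, using the Zariski-openness of $X_{n,k}^{(r)}$ to get closedness of the initial unions.
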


\begin{proof}
Let $w \in [k]^n$ be any word and consider the cell $C_w \subset (\PP^{k-1})^n$.
The definition of the pattern matrix $\PM(w)$ implies that $C_w \subset X_{n,k}^{(r)}$ if
$w \in \WWW_{n,k}^{(r)}$ and $C_w \cap X_{n,k}^{(r)} = \varnothing$ otherwise.
Now observe that the total order on the cells 
$\{ C_w \,:\, w \in [k]^n \}$ inducing the affine paving of Proposition~\ref{projective-paving}
may be taken to start with those $w \notin \WWW_{n,k}^{(r)}$ (in some order)
and end with those $w \in \WWW_{n,k}^{(r)}$ (in some order).  The claim
follows.
\end{proof}

Our next task is to present the cohomology of $X_{n,k}^{(r)}$ as the quotient
$S_{n,k}^{(r)}$ and describe the images of the $\ZZ$-basis 
$\{ [\overline{C_w}] \,:\, w \in \WWW_{n,k}^{(r)} \}$ afforded by Proposition~\ref{affine-paving}.
We being by recalling the standard presentation of the cohomology of $(\PP^{k-1})^n$.

The cohomology of $(\PP^{k-1})^n$ is presented as
\begin{equation}
\label{projective-presentation}
H^{\bullet}((\PP^{k-1})^n) = \ZZ[\xx_n]/ \langle x_1^k, \dots, x_n^k \rangle,
\end{equation}
where $x_i$ represents the Chern class $c_1(\ell_i^*) \in H^2( (\PP^{k-1})^n)$ of the 
dual to the $i^{th}$ tautological line bundle $\ell_i \twoheadrightarrow (\PP^{k-1})^n$.

Given a word $w \in [k]^n$, a polynomial representative for 
$[\overline{C_w}] \in H^{\bullet}((\PP^{k-1})^n)$ was calculated in \cite{PR}.
In order to state it, we recall the classical {\em Schubert polynomials}
attached to permutations in $S_n$.

The {\em Schubert polynomials} $\{ \symm_w \,:\, w \in S_n \}$ are defined recursively
by
\begin{equation}
\begin{cases}
\symm_{w_0} = x_1^{n-1} x_2^{n-2} \cdots x_n^0 & \text{for $w_0 = n(n-1) \dots 1$} \\
\symm_{w s_i} = \partial_i \symm_w & \text{if $w_i > w_{i+1}$}.
\end{cases}
\end{equation}
Here $w s_i$ is the permutation whose one-line notation 
$w s_i = w_1 \dots w_{i+1} w_i \dots w_n$ is obtained from that of $w$ by interchanging
the letters in positions $i$ and $i+1$ and $\partial_i$ is the {\em divided difference operator}
\begin{equation}
\partial_i(f(x_1, \dots, x_n)) = 
\frac{f(x_1, \dots, x_i, x_{i+1}, \dots, x_n) - f(x_1, \dots, x_{i+1}, x_i, \dots, x_n)}{x_i - x_{i+1}}.
\end{equation}

In order to extend Schubert polynomials from permutations in $S_n$ to words in $[k]^n$, we 
will need some notation.  A word $w$ is called {\em convex} if it does not have a subword 
of the form $\dots i \dots j \dots i \dots$.  Any word $w$ has a unique 
convexification $\conv(w)$ which is characterized by being convex, having the same letter
multiplicities as $w$, and having its initial letters appear in the same order from left to right.
For example, we have $\conv(242141) = 224411$.
Furthermore, let $\sigma(w) \in S_n$ be the unique permutation with a minimal number of 
inversions which sorts $w$ to $\conv(w)$; in our example
$\sigma(242141) = 132546 \in S_6$.

Suppose $w = w_1 \dots w_n \in [k]^n$ is a convex word with $m$ distinct letters.
Let $\{ i_1 < i_2 < \cdots < i_{k-m} \}$ be the letters in $[k]$ which do {\em not} appear in $w$.
We define the {\em standardization} $\st(w) = \st(w)_1 \dots \st(w)_{n+k-m} \in S_{n+k-m}$
to be the permutation obtained from $w$ by fixing the initial letters of $w$, replacing the 
non-initial letters of $w$ from left to right with $k+1, k+2, \dots, n+k-m$, and 
appending the sequence $i_1 i_2 \dots i_{k-m}$ to the end.  For example, if 
$(n,k) = (7,5)$ and $w = 3344411$ then 
$\st(w) = 364781925 \in S_9$.

Let $w \in [k]^n$ be an arbitrary word of length $n$ in the letters $1, 2, \dots, k$.  
The {\em word Schubert polynomial} $\symm_w$
is defined by
\begin{equation}
\symm_w := \sigma(w)^{-1}.\symm_{\st(\conv(w))}.
\end{equation}
Although the permutation $\st(\conv(w))$ will lie in a symmetric group of rank $> n$ 
when $w$ does not contain all of the letters $1, 2, \dots, k$, the polynomial
$\symm_w$ depends only on the variables $x_1, x_2, \dots, x_n$ so that 
$\symm_w \in \ZZ[\xx_n]$.  Pawlowski and Rhoades proved \cite{PR}
that the closure of the cell $C_w$ is represented by $\symm_w$ 
under the presentation \eqref{projective-presentation}:
\begin{equation}
\label{word-representative}
[\overline{C_w}] \text{ is represented by } \symm_w \text{ in } H^{\bullet}((\PP^{k-1})^n).
\end{equation}

\begin{theorem}
\label{cohomology-presentation}
Let $r \leq k \leq n$. The singular cohomology of $X_{n,k}^{(r)}$ may be presented as
\begin{equation}
\label{presentation-equation}
H^{\bullet}(X_{n,k}^{(r)}) = S_{n,k}^{(r)}.
\end{equation}
Furthermore, under the 
presentation \eqref{presentation-equation}, if $w \in \WWW_{n,k}^{(r)}$ the cell
closure $\overline{C_w}$ is represented in $H^{\bullet}(X_{n,k}^{(r)})$ by $\symm_w$.
\end{theorem}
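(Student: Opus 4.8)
The plan is to realize $H^{\bullet}(X_{n,k}^{(r)})$ as a quotient of the cohomology of the ambient space $(\PP^{k-1})^n$ via the open inclusion $j : X_{n,k}^{(r)} \hookrightarrow (\PP^{k-1})^n$. Proposition~\ref{affine-paving} gives an affine paving of $X_{n,k}^{(r)}$ with cells $\{C_w : w \in \WWW_{n,k}^{(r)}\}$, so $H^{\bullet}(X_{n,k}^{(r)})$ is a free $\ZZ$-module, concentrated in even degrees, with basis the cell-closure classes $\{[\overline{C_w}] : w \in \WWW_{n,k}^{(r)}\}$; in particular its rank is $|\WWW_{n,k}^{(r)}|$, which equals $|\OP_{n,k}^{(r)}|$ under the evident bijection sending $w$ to the ordered set partition whose $i$-th block is $\{j : w_j = i\}$. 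On the ambient side we have the presentation $H^{\bullet}((\PP^{k-1})^n) = \ZZ[\xx_n]/\langle x_1^k, \dots, x_n^k\rangle$ together with the representatives $\symm_w$ from~\eqref{word-representative}. Composing the quotient map with $j^{\ast}$ produces a ring homomorphism $\varphi : \ZZ[\xx_n] \to H^{\bullet}(X_{n,k}^{(r)})$ sending $x_i$ to the restricted Chern class, and the goal is to show $\ker \varphi = J_{n,k}^{(r)}$.

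First I would prove that $j^{\ast}$ is surjective and simultaneously verify the representative claim. For a word $w \in \WWW_{n,k}^{(r)}$ the cell $C_w$ lies in $X_{n,k}^{(r)}$, and the Zariski closure of $C_w$ inside the open set $X_{n,k}^{(r)}$ is $\overline{C_w} \cap X_{n,k}^{(r)}$, where $\overline{C_w}$ denotes the ambient closure. The standard compatibility of cycle classes with restriction to an open set then gives $j^{\ast}[\overline{C_w}] = [\overline{C_w} \cap X_{n,k}^{(r)}]$, i.e. $j^{\ast}$ carries the ambient class of $\overline{C_w}$ to the cell-closure class in $X_{n,k}^{(r)}$. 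Since the latter classes form a $\ZZ$-basis of $H^{\bullet}(X_{n,k}^{(r)})$, the map $j^{\ast}$ is surjective; and since the ambient class is represented by $\symm_w$ via~\eqref{word-representative}, its image $[\overline{C_w}] \in H^{\bullet}(X_{n,k}^{(r)})$ is represented by $\symm_w$ as well. This establishes the second assertion of the theorem, granting the presentation~\eqref{presentation-equation}.

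Next I would show $J_{n,k}^{(r)} \subseteq \ker\varphi$ by checking that each generator restricts to zero, using the tautological line bundles $\ell_1, \dots, \ell_n$ on $X_{n,k}^{(r)}$ with $x_i = c_1(\ell_i^{\ast})$. The monomials $x_i^k$ already vanish in the ambient cohomology. The spanning condition $\ell_1 + \cdots + \ell_n = \CC^k$ yields a surjection $\bigoplus_i \ell_i \twoheadrightarrow \underline{\CC^k}$ of bundles whose kernel has rank $n-k$; comparing total Chern classes gives $\prod_i(1 - x_i) = c(\ker)$, whose components in degrees $> n-k$ vanish, so $e_{n-k+1}(\xx_n), \dots, e_n(\xx_n)$ map to $0$. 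The genuinely new input is the $r$-Stirling condition $\dim(\ell_1 + \cdots + \ell_r) = r$: it makes $\ell_1 \oplus \cdots \oplus \ell_r \hookrightarrow \underline{\CC^k}$ a rank-$r$ subbundle, and the quotient $E := \underline{\CC^k}/(\ell_1 \oplus \cdots \oplus \ell_r)$ has rank $k-r$. From $c(E) = \prod_{i \le r}(1 - x_i)^{-1} = \sum_{d \ge 0} h_d(\xx_r)$ and $\mathrm{rank}(E) = k - r$, one gets $h_d(\xx_r) = 0$ for $d > k-r$, so in particular $h_{k-r+1}(\xx_r), \dots, h_k(\xx_r)$ map to $0$. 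Hence $\varphi$ factors through a surjection $S_{n,k}^{(r)} = \ZZ[\xx_n]/J_{n,k}^{(r)} \twoheadrightarrow H^{\bullet}(X_{n,k}^{(r)})$.

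Finally I would close the argument by a rank count. By Theorem~\ref{coinversion-standard-basis}(2) the module $S_{n,k}^{(r)}$ is free over $\ZZ$ of rank $|\OP_{n,k}^{(r)}|$, and by the affine paving $H^{\bullet}(X_{n,k}^{(r)})$ is free over $\ZZ$ of the same rank $|\WWW_{n,k}^{(r)}| = |\OP_{n,k}^{(r)}|$. A surjection of free $\ZZ$-modules of equal finite rank is an isomorphism, so the induced map $S_{n,k}^{(r)} \to H^{\bullet}(X_{n,k}^{(r)})$ is an isomorphism, giving~\eqref{presentation-equation}, and the representative claim follows since $\symm_w$ already represents $j^{\ast}[\overline{C_w}]$. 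I expect the main obstacle to be the geometric step identifying the kernel, and specifically the new relations $h_{k-r+1}(\xx_r), \dots, h_k(\xx_r)$: one must set up the quotient bundle $E$ carefully, track the duality convention relating $x_i$ to $c_1(\ell_i)$, and confirm that $\prod_{i \le r}(1-x_i)^{-1} = \sum_d h_d(\xx_r)$ really computes $c(E)$ so that the Chern-class vanishing matches the ideal generators exactly. The $r = 0$ case of this scheme is due to Pawlowski and Rhoades, so the novelty lies precisely in incorporating this linear-independence condition.
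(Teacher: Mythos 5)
Your proof is correct, and while its outer skeleton is the same as the paper's (a ring surjection $S_{n,k}^{(r)} \twoheadrightarrow H^{\bullet}(X_{n,k}^{(r)})$, closed off by the rank count using Theorem~\ref{coinversion-standard-basis}(2), the bijection $|\WWW_{n,k}^{(r)}| = |\OP_{n,k}^{(r)}|$, and the fact that a surjection of free $\ZZ$-modules of equal finite rank is an isomorphism), both of the substantive steps are carried out by genuinely different means. For surjectivity, the paper invokes the stronger excision-type statement from \cite{PR} that $H^{\bullet}(X_{n,k}^{(r)})$ is the quotient of $H^{\bullet}((\PP^{k-1})^n)$ by the ideal generated by the classes $[\overline{C_w}]$ with $w \notin \WWW_{n,k}^{(r)}$, and then rewrites that quotient as $\ZZ[\xx_n]/I$ with $I$ generated by the $x_i^k$ and the word Schubert polynomials $\symm_w$, $w \notin \WWW_{n,k}^{(r)}$; you use only the easy half of this (compatibility of cycle classes with restriction to a Zariski open subset), which suffices because your argument never needs to identify $\ker j^{\ast}$, only to know it contains $J_{n,k}^{(r)}$. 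The real divergence is in how the relations are verified. The paper proves its Claim $J_{n,k}^{(r)} \subseteq I$ combinatorially: it exhibits explicit words $w^i, v^i \notin \WWW_{n,k}^{(r)}$ with $\symm_{w^i} = e_{n-i+1}(\xx_n)$ and $\symm_{v^i} = h_{k-i}(\xx_{i+1})$, then upgrades the triangular family $h_{k-1}(\xx_2), \dots, h_{k-r+1}(\xx_r)$ to the actual generators $h_{k-r+1}(\xx_r), \dots, h_k(\xx_r)$ via the identity $h_d(\xx_i) = x_i h_{d-1}(\xx_i) + h_d(\xx_{i-1})$; note this computation requires $k < n$ (so that every letter of $[k]$ appears in $v^i$), and the case $k = n$ is deferred to \cite{PR}. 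You instead kill the generators geometrically: the spanning condition gives a rank-$(n-k)$ kernel bundle with total Chern class $\prod_{i=1}^{n}(1-x_i)$, forcing $e_{n-k+1}(\xx_n), \dots, e_n(\xx_n) \mapsto 0$, and the independence condition gives a rank-$(k-r)$ quotient bundle with total Chern class $\sum_{d \geq 0} h_d(\xx_r)$, forcing $h_{k-r+1}(\xx_r), \dots, h_k(\xx_r) \mapsto 0$; your conventions ($x_i = c_1(\ell_i^{\ast})$, so $c(\ell_i) = 1 - x_i$) are consistent, and the argument is integral since Chern classes live in cohomology over $\ZZ$. Your route is uniform in $k \leq n$ (no case split), needs no word-Schubert computations beyond the representative statement \eqref{word-representative} for $w \in \WWW_{n,k}^{(r)}$, and explains geometrically why precisely these polynomials generate the ideal; the paper's route stays entirely within the combinatorial formalism of \cite{PR} and avoids any bundle-theoretic input beyond what it quotes, at the cost of the explicit standardization computations and the $k = n$ reduction.
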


\begin{proof}
Consider the affine paving $\{ C_w \,:\, w \in [k]^n \}$ of $(\PP^{k-1})^n$ afforded
by Proposition~\ref{projective-paving}.  If $w \notin \WWW_{n,k}^{(r)}$, we have
$\overline{C_w} \cap X_{n,k}^{(r)} = \varnothing$.  By Proposition~\ref{affine-paving},
it follows that $X_{n,k}^{(r)}$ is obtained from $(\PP^{k-1})^n$ by excising 
the union of cell closures $\bigcup_{w \in [k]^n - \WWW_{n,k}^{(r)}} \overline{C_w}$.
It follows (see \cite{PR}) that the cohomology ring $H^{\bullet}(X_{n,k}^{(r)})$ may be 
presented as 
\begin{equation}
\label{quotient-presentation}
H^{\bullet}(X_{n,k}^{(r)}) = H^{\bullet}((\PP^{k-1})^n)/J,
\end{equation}
where $J \subseteq H^{\bullet}((\PP^{k-1})^n)$ is the ideal generated by those 
$[\overline{C_w}]$ for which $w \in [k]^n - \WWW_{n,k}^{(r)}$.
If we use the presentation of $H^{\bullet}((\PP^{k-1})^n)$ given in
\eqref{projective-presentation} together with 
the polynomial representatives \eqref{word-representative},
we can write
\begin{equation}
\label{quotient-presentation-two}
H^{\bullet}(X_{n,k}^{(r)}) = \ZZ[\xx_n]/I,
\end{equation}
where $I \subseteq \ZZ[\xx_n]$ is the ideal generated by 
$x_1^k, x_2^k, \dots, x_n^k$ together with $\{ \symm_w \,:\, w \in [k]^n - \WWW_{n,k}^{(r)} \}$.

{\bf Claim:} {\em We have $J_{n,k}^{(r)} \subseteq I$.}  

To prove the Claim, we show that every generator of $J_{n,k}^{(r)}$ lies in $I$. We handle
each type of generator separately.
\begin{itemize}
\item
The generators $x_1^k, x_2^k, \dots, x_n^k$ of $J_{n,k}^{(r)}$ are also generators of $I$.
\item
For the generators $e_{n-i+1}(\xx_n)$ (where $1 \leq i \leq k$) of $J_{n,k}^{(r)}$ we do 
the following.  For $1 \leq i \leq k$ let $w^i$ be the unique weakly increasing word
in $[k]^n$ containing exactly the letters $[k] - \{i\}$ and whose first $k-1$ letters are 
distinct.  For example, the word $w^3 \in [5]^7$ is  $w^3 = 1245555$.
Since $i$ does
not appear in $w^i$, we have $w^i \notin \WWW_{n,k}^{(r)}$, 
so that $\symm_{w^i}$ is a generator
of $I$.
Furthermore, we have 
\begin{equation*}
\st(\conv(w^i)) = 12 \dots (i-1)(i+1) \dots n(n+1)i \in S_{n+1}
\end{equation*}
which 
implies $\symm_{w^i} = e_{n-i+1}(\xx_n)$.
\item  Finally, we consider the generators $h_{k-i+1}(\xx_r)$ (where $1 \leq i \leq r$)
of $J_{n,k}^{(r)}$.  These generators are not in general generators of $I$, but we show 
that they nevertheless are contained in $I$.  
If $k = n$ then $X_{n,k}^{(r)} = X_{n,n}$ so that the theorem follows from \cite{PR};
we assume that $k < n$.

For $1 \leq i \leq r-1$, let $v^i \in [k]^n$
be the following weakly increasing word:
\begin{equation*}
v^i = 12 \dots (i-1)ii(i+1)(i+2) \dots (k-1)k \dots k.
\end{equation*}
For example, the word  $v^3 \in [5]^7$ is $v^3 = 12334555$.  Since $k < n$, every letter
in $[k]$ appears in $v^i$.  However, since the first $r$ letters of $v^i$
are not distinct, we have $v^i \notin \WWW_{n,k}^{(r)}$, so that 
$\symm_{v^i}$ is a generator of $I$.  We have
\begin{equation*}
\st(\conv(v^i)) = 1 2 \dots (i-1)i(k+1)(i+1)(i+2) \dots n \in S_n
\end{equation*}
which implies $\symm_{v^i} = h_{k-i}(\xx_{i+1})$.

The above paragraph shows that 
\begin{equation*}
h_{k-r+1}(\xx_r), h_{k-r+2}(\xx_{r-1}), \dots, h_{k-1}(\xx_2) \in I.  
\end{equation*}
The variable power
$h_k(\xx_1) = x_1^k$ also lies in $I$.  The identity
\begin{equation}
h_d(x_1, \dots, x_{i-1}, x_i) = x_i \cdot h_{d-1}(x_1, \dots, x_{i-1}, x_i) +
h_d(x_1, \dots, x_{i-1})
\end{equation}
together with the fact that $I$ is an ideal in $\ZZ[\xx_n]$ can be used to show that 
\begin{equation*}
h_{k-r+1}(\xx_r), h_{k-r+2}(\xx_r), \dots, h_k(\xx_r) \in I, 
\end{equation*}
which is what we wanted to show.
This completes the proof of the Claim.
\end{itemize}

By our Claim, we have a canonical surjection of $\ZZ$-modules
\begin{equation}
S_{n,k}^{(r)} = \ZZ[\xx_n]/J_{n,k}^{(r)} \twoheadrightarrow 
\ZZ[\xx_n]/I = H^{\bullet}(X_{n,k}^{(r)}).
\end{equation}
By Theorem~\ref{coinversion-standard-basis}, the module 
$S^{(r)}_{n,k}$ is a free $\ZZ$-module of rank 
$|\OP_{n,k}^{(r)}|$.  
By Proposition~\ref{affine-paving}, the cohomology ring 
$H^{\bullet}(X_{n,k}^{(r)})$ is a free $\ZZ$-module of rank
$|\WWW_{n,k}^{(r)}|$.  Since we have $|\OP_{n,k}^{(r)}| = |\WWW_{n,k}^{(r)}|$ and 
any surjection between free $\ZZ$-modules of the same rank must be an isomorphism,
we obtain the presentation~\eqref{presentation-equation} of the cohomology
of $X_{n,k}^{(r)}$.  The last sentence of the theorem follows
from \eqref{word-representative}.
\end{proof}

The cohomology representatives of the cell closures in any affine paving of a smooth irreducible variety
$X$ give rise to a $\ZZ$-basis for the cohomology ring  $H^{\bullet}(X)$.
Theorem~\ref{cohomology-presentation} therefore yields the following immediate corollary.

\begin{corollary}
\label{schubert-basis}
Let $r \leq k \leq n$.  The set of polynomials 
$\{ \symm_w \,:\, w \in \WWW_{n,k}^{(r)} \}$ descends to a $\ZZ$-basis for 
$S_{n,k}^{(r)}$.
\end{corollary}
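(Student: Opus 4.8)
The plan is to assemble this from the structural results already established, since the statement is a direct consequence of Theorem~\ref{cohomology-presentation} together with the general principle relating affine pavings to cohomology bases. First I would invoke Proposition~\ref{affine-paving}, which exhibits the affine paving $\{ C_w \,:\, w \in \WWW_{n,k}^{(r)} \}$ of the smooth irreducible variety $X_{n,k}^{(r)}$. By the general fact recalled in the opening discussion of affine pavings in Section~\ref{Line}, the classes of the cell closures $\{ [\overline{C_w}] \,:\, w \in \WWW_{n,k}^{(r)} \}$ form a $\ZZ$-basis of the singular cohomology ring $H^{\bullet}(X_{n,k}^{(r)})$.

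Next I would transport this basis across the identification supplied by Theorem~\ref{cohomology-presentation}. That theorem provides two things at once: the isomorphism $H^{\bullet}(X_{n,k}^{(r)}) = S_{n,k}^{(r)}$, and the explicit statement that under this presentation each cell closure $\overline{C_w}$ with $w \in \WWW_{n,k}^{(r)}$ is represented by the word Schubert polynomial $\symm_w$. Feeding the second statement into the first, the image of the $\ZZ$-basis $\{ [\overline{C_w}] \}$ under the isomorphism is precisely the set $\{ \symm_w \,:\, w \in \WWW_{n,k}^{(r)} \}$, regarded modulo $J_{n,k}^{(r)}$. Since an isomorphism of $\ZZ$-modules carries a $\ZZ$-basis to a $\ZZ$-basis, this set descends to a $\ZZ$-basis of $S_{n,k}^{(r)}$, which is exactly the claim.

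The only substantive point to watch — rather than a genuine obstacle — is to use the two halves of Theorem~\ref{cohomology-presentation} compatibly: one must read off the representatives $\symm_w$ as elements of $S_{n,k}^{(r)}$ through the \emph{same} isomorphism that identifies the cohomology ring, rather than in the ambient ring $H^{\bullet}((\PP^{k-1})^n)$. Once that compatibility is noted, no further computation is needed; in particular the rank equality $|\OP_{n,k}^{(r)}| = |\WWW_{n,k}^{(r)}|$ already used inside the proof of Theorem~\ref{cohomology-presentation} guarantees that the listed polynomials are exactly the right number to constitute a basis, so there is nothing left to verify.
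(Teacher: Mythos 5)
Your proof is correct and follows essentially the same route as the paper: the paper likewise treats this as an immediate consequence of the general fact that cell closures in an affine paving give a $\ZZ$-basis of cohomology, combined with Theorem~\ref{cohomology-presentation}'s identification $H^{\bullet}(X_{n,k}^{(r)}) = S_{n,k}^{(r)}$ and its statement that $\overline{C_w}$ is represented by $\symm_w$. Your closing remark about reading the representatives through the same isomorphism is a reasonable point of care, but it introduces no content beyond what the paper's two-sentence derivation already contains.
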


We have the following isomorphisms of ungraded
$S_r \times S_{n-r}$-modules:
\begin{equation}
\label{ungraded-geometric-isomorphism} 
H^{\bullet}(X_{n,k}^{(r)}; \QQ) \cong
\QQ \otimes_{\ZZ} H^{\bullet}(X_{n,k}^{(r)}) \cong \QQ \otimes_{\ZZ} S_{n,k}^{(r)} \cong
R^{(r)}_{n,k} \cong \QQ[\OP_{n,k}^{(r)}].
\end{equation}
The first of these isomorphisms follows from the Universal Coefficient Theorem (see e.g. \cite{Hatcher})
and the fact that 
$H^{\bullet}(X_{n,k}^{(r)})$ vanishes in odd degree.
The second is Theorem~\ref{cohomology-presentation}. The third follows from the definitions
of $S_{n,k}^{(r)}$ and $R_{n,k}^{(r)}$. The fourth follows from 
Corollary~\ref{chevalley}.
The space $X_{n,k}^{(r)}$ of line configurations therefore gives a geometric model for 
ordered $r$-Stirling partitions.
It may be possible to exploit this geometric model to describe the {\em graded} structure of 
$R_{n,k}^{(r)}$ as follows; the authors thank an anonymous referee for pointing this out.

Let $G(r,k)$ be the Grassmannian of $r$-dimensional subspaces  $V \subseteq \CC^k$ and consider the  
subspace $Y_{n,k}^{(r)} \subseteq G(r,k) \times (\PP^{k-1})^{n-r}$ defined as follows
\begin{equation}
Y_{n,k}^{(r)} := \{ (V, \ell_{r+1}, \dots, \ell_n) \,:\, V + \ell_{r+1} + \cdots + \ell_n = \CC^k \}.
\end{equation}
The space $Y_{n,k}^{(r)}$ is an open subvariety of $G(r,k) \times (\PP^{k-1})^{n-r}$.  We have a natural map
\begin{center}
\begin{tikzpicture}
\node(E) at (-2.2,0) {$\pi:$};

\node(A) at (0,0) {$X_{n,k}^{(r)}$};

\node(B) at (6,0) {$Y_{n,k}^{(r)}$};

\node(C) at (0,-1) {$(\ell_1, \dots, \ell_r, \ell_{r+1}, \dots, \ell_n)$};

\node(D) at (6,-1) {$(\ell_1 + \dots + \ell_r, \ell_{r+1}, \dots, \ell_n)$};

\draw[->] (A) -- (B);

\draw[|->] (C) -- (D);
\end{tikzpicture}
\end{center}
obtained by taking the (necessarily $r$-dimensional) span of the first $r$ lines in a typical configuration
in $X_{n,k}^{(r)}$.

The map $\pi: X_{n,k}^{(r)} \rightarrow Y_{n,k}^{(r)}$ is a fiber bundle. The fiber $F$ over a  point 
$(V, \ell_{r+1}, \dots, \ell_n) \in Y_{n,k}^{(r)}$ is given by the space of $r$-tuples 
$(\ell_1, \dots, \ell_r)$
of linearly independent
lines in the $r$-dimensional vector space $V$, which is homotopy equivalent to the flag variety 
$\mathcal{F \ell}(r)$. 
The inclusion $\iota: F \hookrightarrow X_{n,k}^{(r)}$ induces a map on rational cohomology
$\iota^*: H^{\bullet}(X_{n,k}^{(r)}; \QQ) \rightarrow H^{\bullet}(F; \QQ)$.
Since $H^{\bullet}(F;\QQ)$ is generated by the Chern 
classes $c_1(\ell_1^*), \dots, c_1(\ell_r^*)$
of the tautological line bundles 
$\ell_1^*, \dots, \ell_r^*$ over $F$, and these line bundles are pullbacks under $\iota$ of the corresponding
bundles on $X_{n,k}^{(r)}$, the map $\iota^*$ is a surjection.

By the last paragraph,
the Leray-Hirsch Theorem (see e.g. \cite{Hatcher})  provides the following isomorphism of 
$H^{\bullet}(Y_{n,k}^{(r)}; \QQ)$-modules:
\begin{equation}
\label{leray-hirsch} H^{\bullet}(X_{n,k}^{(r)}; \QQ) \cong
H^{\bullet}(F; \QQ) \otimes_{\QQ} H^{\bullet}(Y_{n,k}^{(r)}; \QQ) .
\end{equation}
The isomorphism \eqref{leray-hirsch} seems quite close to the 
conjectural isomorphism \eqref{tensor-conjecture}.
The left-hand-side of \eqref{leray-hirsch} is the graded $S_r \times S_{n-r}$-module
$R_{n,k}^{(r)}$.
The tensor factor $H^{\bullet}(F; \QQ)$ is the classical coinvariant module $R_r$
for the symmetric group $S_r$.
Determining the graded $S_r \times S_{n-r}$-isomorphism type of $R_{n,k}^{(r)}$ therefore reduces to 
determining the graded $S_{n-r}$-structure of $H^{\bullet}(Y_{n,k}^{(r)}; \QQ)$.

\section*{Acknowledgements}
B. Rhoades was partially supported by NSF Grant DMS-1500838.
A. T. Wilson was partially supported by an NSF Mathematical Sciences Postdoctoral
Research Fellowship. The authors thank an anonymous referee for their careful reading of the 
paper and, in particular, for pointing out the applicability of the Leray-Hirsch Theorem.
We thank Jeff Remmel for his mathematics, mentoring, and friendship.


\begin{thebibliography}{9}

 \bibitem{C}  \textsc{C. Chevalley.}  Invariants of finite groups generated by reflections.
 {\it Amer. J. Math.}, {\bf 77 (4)} (1955), 778--782.
 
 \bibitem{HRW}  \textsc{J. Haglund, J. Remmel, and A. T. Wilson.}  The Delta Conjecture.  
 {\it Trans. Amer. Math. Soc.}, {\bf 370} (2018), 4029--4057. 

\bibitem{HRS} \textsc{J. Haglund, B. Rhoades, and M. Shimozono.}  Ordered set partitions, generalized 
coinvariant algebras, and the Delta Conjecture.  
{\it Adv. Math.}, {\bf 329} (2018), 851--915.


\bibitem{Hatcher} \textsc{A. Hatcher.} {\em Algebraic Topology.} Cambridge: Cambridge University Press. (2002)

\bibitem{PR} \textsc{B. Pawlowski and B. Rhoades.} A flag variety for the Delta Conjecture.
Submitted, 2018.  {\tt arXiv:1711.08301}.

\end{thebibliography}
\end{document}